\newcommand{\mm}{\mathfrak m}
\newcommand{\nn}{\mathfrak n}
\newcommand{\Z}{\mathbb{Z}}
\newcommand{\N}{\mathbb{N}}
\newcommand{\Q}{\mathbb{Q}}
\newcommand{\Fc}{\mathcal{F}}
\DeclareMathOperator{\chara}{char}
\DeclareMathOperator{\codim}{codim}
\DeclareMathOperator{\den}{den}
\DeclareMathOperator{\embdim}{embdim}
\DeclareMathOperator{\gr}{gr}
\DeclareMathOperator{\Hilb}{HF}
\DeclareMathOperator{\Img}{Im}
\DeclareMathOperator{\id}{id}
\DeclareMathOperator{\Ker}{Ker}
\DeclareMathOperator{\lind}{ld}
\DeclareMathOperator{\linp}{lin}
\DeclareMathOperator{\reg}{reg}
\DeclareMathOperator{\Tor}{Tor}
\newtheorem{thm}{\bf Theorem}[section]
\newtheorem{lem}[thm]{\bf Lemma}
\newtheorem{cor}[thm]{\bf Corollary}
\newtheorem{prop}[thm]{\bf Proposition}
\theoremstyle{definition}
\theoremstyle{remark}
\newtheorem{quest}[thm]{\bf Question}
\newtheorem{rem}[thm]{Remark}
\newtheorem{ex}[thm]{Example}
\theoremstyle{remark}
\newtheorem*{rem*}{Remark}
\numberwithin{equation}{section}
\title[Absolutely Koszul algebras]{The absolutely Koszul property of Veronese subrings and Segre products}
\author{Hop D. Nguyen}
\address{Dipartimento di Matematica, Universit\`{a} di Genova, Via Dodecaneso 35, 16146 Genova, Italy}
\email{ngdhop@gmail.com}
\subjclass[2010]{13D02, 13D05}
\keywords{Poincar\'e series; linearity defect; Koszul algebra; absolutely Koszul algebra; Veronese subring; Segre product}
\begin{document}

\begin{abstract}
Absolutely Koszul algebras are a class of rings over which any finite graded module has a rational Poincar\'e series. We provide a criterion to detect non-absolutely Koszul rings. Combining the criterion with machine computations, we identify large families of Veronese subrings and Segre products of polynomial rings which are not absolutely Koszul. In particular, we classify completely the absolutely Koszul algebras among Segre products of polynomial rings, at least in characteristic $0$.
\end{abstract}

\maketitle

\section{Introduction}

Let $k$ be a field, $R$ be a standard graded $k$-algebra. Let $M$ be a finitely generated graded $R$-module. An important problem in the theory of free resolutions concerns the rationality of the Poincar\'e series 
$$
P^R_M(t)=\sum_{i=0}^\infty \beta_i(M)t^i\in \Q[[t]],
$$ 
where $\beta_i(M)=\dim_k \Tor^R_i(k,M)$ denotes the $i$-th Betti number of $M$. While it is well-known that rationality does not hold in general, there are large classes of rings where every module has a rational Poincar\'e series, for example complete intersections \cite{AGP}, \cite{Gu} and Golod rings \cite{Les} and \cite[Theorem 5.3.2]{Avr2}. For a remarkable example, assume that $\chara k=0$ and $R=k[x_1,\ldots,x_n]/I^s$ where $I$ is a non-zero proper homogeneous ideal of $k[x_1,\ldots,x_n]$ and $s\ge 2$. Herzog and Huneke \cite{HeHu} proved recently that any such $R$ is a Golod ring, in particular any finitely generated graded module over $R$ has rational Poincar\'e series.

Following Roos \cite{Roos05}, we say that a standard graded $k$-algebra $R$ is {\it good} if there exists a polynomial $\den(t) \in \Z[t]$ such that for every finitely generated graded module $M$, 
$$
P^R_M(t)\cdot \den(t) \in \Z[t].
$$
Otherwise we say $R$ is {\it bad}. We say that $R$ {\it has the Backelin--Roos property} if there exist a complete intersection $Q$ and a Golod homomorphism $Q\to R$ (see \cite{Avr,Avr2, Le} for more details on Golod homomorphisms). By a result attributed to Levin \cite[Proposition 5.18]{AKM} which was also proved by Backelin and Roos \cite{BR}, if $R$ has the Backelin--Roos property then it is good. Proving rationality of Poincar\'e series using the Backelin--Roos property is a powerful method; see, e.g., \cite{AIS,AKM,HS,RS}. Note however that combining \cite[pp.~291--292]{KP} and \cite[Example 5.6]{Kus}, we have examples of (non-Koszul) good rings which do not have the Backelin--Roos property. 

A different method, among others, of establishing the rationality of Poincar\'e series comes from work of Herzog and Iyengar \cite{HIy}. They proved that $P^R_M(t)$ is a rational function with constant denominator, if $M$ has finite {\it linearity defect} (see Section \ref{sect_background}). Moreover, from \cite[Theorem 5.9]{HIy}, if $R$ is a Koszul algebra having the Backelin--Roos property, then any finitely generated module has finite linearity defect. 

Following the terminology in \cite{IyR}, we call $R$ an {\it absolutely Koszul} algebra if every finitely generated graded $R$-module has finite linearity defect. Hence any absolutely Koszul algebra is good. So far, there was no example of a good Koszul algebra which is not absolutely Koszul, and no example of an absolutely Koszul algebra which does not have the Backelin--Roos property. The following basic question about absolutely Koszul rings was raised in \cite[Section 4.2, Question 14]{CDR}.
\begin{quest}
\label{quest_VeroneseSegre}
How do absolutely Koszul algebras behave with respect to algebra operations like taking Veronese subrings, Segre products or fibre products?
\end{quest}
The question is non-trivial since while Koszul algebras behave well with respect to standard operations \cite{BF}, \cite{BM}, it is not clear at all if the same thing happens to absolutely Koszul algebras. In fact, Question \ref{quest_VeroneseSegre} has a negative answer for tensor product: For $R=k[x,y]/(x,y)^2$, $R\otimes_k R$ is not even good, not to mention absolutely Koszul \cite[Theorem 2.4]{Roos05}. For fibre product, the answer to Question \ref{quest_VeroneseSegre} is yes: A fibre product over $k$ of two standard graded $k$-algebras is absolutely Koszul if and only if both factors are so (\cite[Theorem 2.6]{CINR}). The cases of Veronese subring and Segre product require more work, and so far only partial answers are available.
\begin{prop}[{\cite[Corollary 5.4]{CINR}}]
\label{prop_Veronese_positive}
Assume $\chara k=0$. Let $k[x_1,\ldots,x_n]$ be a polynomial ring in $n$ variables and $c\ge 2$ an integer.
The $c$-th Veronese subring $k[x_1,\ldots,x_n]^{(c)}$ of $k[x_1,\ldots,x_n]$ is absolutely Koszul in the following cases:
\begin{enumerate}[\quad\rm(1)]
\item $c=2$ and $n\le 6$;
\item $c\in \{3,4\}$ and $n\le 4$;
\item $c\ge 5$ and $n\le 3$.
\end{enumerate}
\end{prop}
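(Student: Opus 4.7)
The plan is to bring the statement into the range of the Herzog--Iyengar machinery and then verify the remaining condition case by case. By \cite[Theorem 5.9]{HIy}, a Koszul algebra enjoying the Backelin--Roos property is absolutely Koszul, and the $c$-th Veronese subring $R=k[x_1,\ldots,x_n]^{(c)}$ is classically known to be Koszul; so it suffices, in each of the three regimes (1)--(3), to exhibit a complete intersection $Q$ together with a Golod surjection $Q\twoheadrightarrow R$.

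To produce such a $Q$, I would present $R=S/I$, where $S=k[y_\alpha]$ is the polynomial ring on the $\binom{n+c-1}{c}$ degree-$c$ monomials in $x_1,\ldots,x_n$ and $I$ is the toric ideal of the $c$-uple embedding, and then look inside $I$ for a regular sequence $\mathbf{f}=f_1,\ldots,f_r$ such that the induced map $Q:=S/(\mathbf{f})\twoheadrightarrow R$ is Golod. Golodness of this surjection is a delicate condition, but it can be verified algorithmically, e.g.\ by the standard Massey-product criterion on the Koszul homology of $Q$, or equivalently by comparison of the Poincar\'e series of $k$ over $Q$ and over $R$. Since the three numerical restrictions (1)--(3) are precisely the cases where the embedding dimension $\binom{n+c-1}{c}$ does not exceed $21$, both the search for $\mathbf{f}$ and the subsequent Golodness check are feasible in a computer algebra system, and this is presumably how \cite{CINR} arrives at the stated list. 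The characteristic-zero hypothesis likely enters through one of the general Golod-homomorphism constructions invoked along the way (or through the need to apply a symmetric functions argument on $R$ viewed as a direct summand of $k[x_1,\ldots,x_n]$).

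The main obstacle is the Golodness verification itself: the obvious candidates for $\mathbf{f}$, such as regular sequences formed from the lowest-degree generators of $I$, almost never give a Golod quotient, and there is no structural principle singling out the correct sequence, so in each listed case one is essentially driven to a guided computer search. That the result does not extend to larger $n$ or $c$ is therefore not merely a computational artefact --- and indeed the non-absolutely Koszul criterion developed in the remainder of the present paper is intended precisely to demonstrate that the limitation is substantive rather than cosmetic.
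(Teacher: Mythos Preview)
The present paper does not itself prove this proposition; it is quoted from \cite[Corollary 5.4]{CINR}. Your overall strategy---establish the Backelin--Roos property and then invoke \cite[Theorem 5.9]{HIy}---is indeed the route taken there, but your proposed execution has a genuine gap.

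Case (3) reads ``$c\ge 5$ and $n\le 3$'': for $n=3$ alone this is an \emph{infinite} family in $c$, so no finite computer search can settle it. Your supporting claim that the embedding dimension $\binom{n+c-1}{c}$ never exceeds $21$ in the listed cases is also false: it equals $\binom{7}{4}=35$ already for $(n,c)=(4,4)$, and for $n=3$ it is $\binom{c+2}{2}$, which is unbounded as $c\to\infty$. A uniform theoretical argument is therefore unavoidable for~(3), and even the sporadic case $(4,4)$ lies outside the computational range you describe.

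The missing ingredient is to work with the artinian reduction $U=R/(x_1^c,\ldots,x_n^c)$, whose Hilbert series is the $h$-polynomial of $R$, a polynomial of degree $n-1$. For $n\le 2$ one gets $\mm_U^2=0$, so $U$ is Golod and the Backelin--Roos property follows for every $c$. For $n=3$ one has $\mm_U^3=0$, and \cite{CINR} supplies a uniform argument (in the spirit of \cite{AIS}) for such short Koszul reductions, again handling all $c$ at once. Only the four remaining pairs $(n,c)\in\{(4,3),(4,4),(5,2),(6,2)\}$ then need individual attention, and even there one argues via the artinian reduction rather than by hunting for a Golod regular sequence inside the toric presentation. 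Your sketch has the right target but omits precisely the step that makes the infinite family tractable.
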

\begin{prop}[{\cite[Proposition 5.9]{CINR}}]
\label{prop_Segre_positive}
Assume $\chara k =0$ and $1\le m \le n$ are integers. The Segre product $S_{m,n}$ of the polynomial rings $k[x_1,\ldots,x_m]$ and $k[y_1,\ldots,y_n]$ is absolutely Koszul when 
\begin{enumerate}[\quad\rm(1)]
\item $m \le 2$,
\item $m=3$ and $n \le 5$,
\item $m=n=4$.
\end{enumerate}
\end{prop}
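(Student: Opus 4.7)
The strategy is to establish the Backelin--Roos property for each Segre product in the list, which, combined with the Koszulness of Segre products of polynomial rings and \cite[Theorem 5.9]{HIy}, yields absolute Koszulity. Concretely, I would look for a complete intersection $Q$ and a Golod homomorphism $Q\to S_{m,n}$; a standard way to produce such data is to find a regular sequence $\underline{\ell}$ of linear forms in $S_{m,n}$ whose quotient $S_{m,n}/(\underline{\ell})$ is Golod and to lift $\underline{\ell}$ to a regular sequence in a suitable $Q$.

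For case (1) with $m=1$, $S_{1,n}=k[y_1,\dots,y_n]$ is a polynomial ring and hence trivially absolutely Koszul. For $m=2$, I would exploit the explicit determinantal presentation of $S_{2,n}$ by the $2\times 2$ minors of a generic $2\times n$ matrix: reduce modulo a maximal regular sequence of generic linear forms and verify that the resulting Artinian ring is Golod. This last step could be carried out by direct inspection of trivial Massey products in $\Tor$ of $k$ over the Artinian reduction, or by matching the Poincar\'e series of $k$ against the Golod formula.

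For cases (2) and (3) the rings have bounded embedding dimension, so the natural route is machine computation: compute a minimal presentation of $S_{m,n}$ in Macaulay2, pass to a general Artinian reduction, and certify Golodness using computer algebra (e.g.\ via the \texttt{DGAlgebras} package). The listed restrictions on $(m,n)$ presumably reflect precisely the range in which such verifications remain feasible.

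The main obstacle is to construct the Golod homomorphism explicitly and uniformly. For the $m=2$ family one cannot expect a single complete intersection lift to work since the codimension $n-1$ grows with $n$; one must either produce lifts in families or invoke a structural property (such as the Artinian reduction being \emph{short}) implying Golodness. For $m=n=4$, even after reducing modulo a maximal linear regular sequence, the resulting Artinian ring retains embedding dimension $9$, and certifying Golodness pushes the limits of direct computation.
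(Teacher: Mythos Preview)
The paper does not prove this proposition at all: it is quoted verbatim from \cite[Proposition~5.9]{CINR} as background, with no argument supplied here. So there is nothing in the present paper to compare your sketch against.

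That said, your sketch contains a genuine confusion that would prevent it from working as written. Establishing the Backelin--Roos property means producing a complete intersection $Q$ together with a Golod surjection $Q\to S_{m,n}$; concretely (since $S_{m,n}$ is Koszul) one seeks a regular sequence of quadrics \emph{inside the defining ideal} of $S_{m,n}$ so that the residual ideal has a $2$-linear resolution over the resulting complete intersection. What you propose instead is to take a regular sequence $\underline{\ell}$ of linear forms \emph{in $S_{m,n}$ itself}, pass to the Artinian reduction $S_{m,n}/(\underline{\ell})$, and check that this quotient is Golod. These are different operations: Golodness of $S_{m,n}/(\underline{\ell})$ says nothing direct about a Golod map into $S_{m,n}$, and the phrase ``lift $\underline{\ell}$ to a regular sequence in a suitable $Q$'' does not bridge the gap. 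Nor does absolute Koszulness of the Artinian reduction propagate upward: Theorem~\ref{thm_Koszulmap} only transfers absolute Koszulness \emph{from} $R$ \emph{to} a linear quotient, not the other way around.

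For the record, the case $m\le 2$ is not handled in \cite{CINR} by a Golod-reduction argument either; $S_{2,n}$ is a rational normal scroll, and those are known to have the Backelin--Roos property for structural reasons (one exhibits an explicit complete intersection of quadrics $Q$ with $\ker(Q\to S_{2,n})$ of linear resolution). The finitely many remaining cases in (2) and (3) are indeed dispatched by computation, but again by exhibiting a quadratic regular sequence in the defining ideal and checking the $2$-linear resolution condition for the kernel, not by verifying Golodness of an Artinian reduction.
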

Some evidence suggests that not all Veronese subrings and Segre products of polynomial rings are absolutely Koszul. It is proved in \cite[Lemma 5.7]{CINR} that for $(n,c)$ equals  either (7,2), (5,3), (5,4), or $(4,c)$, with $c\ge 5$, $k[x_1,\ldots,x_n]^{(c)}$ does not have the Backelin--Roos property. The same thing happens for $S_{m,n}$ if $(m,n)$ equals either (3,6) or (4,5). Our study in this paper complements Propositions \ref{prop_Veronese_positive} and \ref{prop_Segre_positive} by providing classes of Veronese rings and Segre products which are not absolutely Koszul.
\newpage
\begin{thm}
\label{thm_Veronese_negative}
Assume $\chara k=0$. Let $n\ge 1$ and $c\ge 2$ be integers. Then the Veronese ring $k[x_1,\ldots,x_n]^{(c)}$ is \emph{not} absolutely Koszul in the following cases:
\begin{enumerate}[\quad\rm(1)]
\item $c=2$ and $n\ge 7$;
\item $c\in \{3,4\}$ and $n\ge 5$;
\item $c= 5$ and $n\ge 4$.
\end{enumerate}
\end{thm}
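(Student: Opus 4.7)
The plan is to invoke the non-absolute-Koszul criterion developed earlier in the paper on a short list of base pairs $(n,c)$ and then propagate the conclusion to all larger $n$ by a monotonicity argument along the Veronese inclusion. The base pairs would be $(n,c)\in\{(7,2),(5,3),(5,4),(4,5)\}$, one for the lower endpoint of each of the three families in the theorem (case~(2) contributing two). For each such pair, the criterion should reduce absolute Koszulness of $R=k[x_1,\ldots,x_n]^{(c)}$ to a computable finite datum, typically a bounded piece of a minimal free resolution that exhibits a graded $R$-module $M$ with $\lind_R M=\infty$. These checks fit naturally into a Macaulay2 script; it is encouraging that the very same four pairs already appear in the paper as the cases where the Backelin--Roos property is known to fail, so a substantial amount of the required resolution data should already be in circulation.

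The second ingredient is a monotonicity lemma of the following shape: if there are graded $k$-algebra maps $R \hookrightarrow R' \twoheadrightarrow R$ composing to $\id_R$, and $R$ is not absolutely Koszul, then $R'$ is not absolutely Koszul either. Applied to $R=k[x_1,\ldots,x_n]^{(c)} \hookrightarrow R'=k[x_1,\ldots,x_{n+1}]^{(c)}$ with retraction $x_{n+1}\mapsto 0$, iteration turns each base case into the full family $n'\ge n$ at the same $c$. Combining this lemma with Step~1 then yields the three cases of Theorem~\ref{thm_Veronese_negative} at once.

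The main obstacle is the monotonicity lemma itself, since linearity defect is sensitive to the ambient algebra and does not in general transfer under ring maps. The natural approach is to take an $R$-module $M$ with $\lind_R M=\infty$, regard it as an $R'$-module via the retraction $\pi\colon R'\to R$, and compare the minimal linear strands of the resolutions of $M$ over $R$ and over $R'$ using the splitting $R'\cong R\oplus \Ker\pi$ of $R$-modules. A delicate point is that the $R'$-resolution of $M$ involves the generators of $\Ker\pi$, which in the Veronese setting are degree-$c$ monomials divisible by $x_{n+1}$ and hence behave as many linear forms of $R'$ rather than a single one. If a direct module-theoretic comparison proves cumbersome, an alternative is to translate $\lind$ into Koszul-homology language and exploit retract-compatibility there, along the lines of the techniques in \cite{HIy}.
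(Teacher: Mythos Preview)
Your overall strategy---reduce to the four base pairs $(7,2),(5,3),(5,4),(4,5)$ and then propagate to larger $n$ via the retract $V_{m,c}\hookrightarrow V_{n,c}$ for $m\le n$---is exactly what the paper does. But you have inverted where the difficulty lies.

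The monotonicity step you call ``the main obstacle'' is already Lemma~\ref{lem_retract}, which the paper states without proof because it is standard: if $S\to R$ is an algebra retract of standard graded $k$-algebras and $R$ is absolutely Koszul, then so is $S$. No delicate comparison of linear strands over $R$ and $R'$, and no Koszul-homology argument, is needed; the lemma applies directly to the retract $V_{n,c}\hookrightarrow V_{n+1,c}\twoheadrightarrow V_{n,c}$ and you are done.

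The genuine content is in the four base cases, which you wave at as ``a Macaulay2 script'' producing ``a bounded piece of a minimal free resolution that exhibits a graded $R$-module $M$ with $\lind_R M=\infty$''. A finite piece of a resolution can never certify infinite linearity defect; that is precisely why the paper needs Corollary~\ref{cor_conjpair}. The paper's actual argument for each base $(n,c)$ first passes to the artinian reduction $U=R/(x_1^c,\ldots,x_n^c)$ via Theorem~\ref{thm_Koszulmap}, then to a still smaller linear quotient $W$ of $U$ (with $\reg_R W=0$ guaranteed by the strong Koszulness of Veronese rings), and only then exhibits explicit linear forms $l_1,l_2\in W$ and an ideal $K$ satisfying the colon-ideal and Betti-splitting hypotheses of Corollary~\ref{cor_conjpair}. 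The machine computation is a \emph{search} for these specific $l_1,l_2,K$ together with verification of a handful of ideal identities---a genuinely finite certificate for $\lind_W(l_1)=\infty$---not an open-ended resolution computation. Your proposal gives no indication of this mechanism, and without it the base cases remain open.
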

\begin{thm}
\label{thm_Segre_negative}
Assume $\chara k =0$ and $1\le m\le n$ be integers. The Segre product $S_{m,n}$ of the polynomial rings $k[x_1,\ldots,x_m]$ and $k[y_1,\ldots,y_n]$ is \emph{not} absolutely Koszul in the following cases 
\begin{enumerate}[\quad\rm(1)]
\item $m=3$ and $n \ge 6$,
\item $m\ge 4$ and $n \ge 5$.
\end{enumerate}
In particular, the only absolutely Koszul $S_{m,n}$ are provided by Proposition \ref{prop_Segre_positive}.
\end{thm}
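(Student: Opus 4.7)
The plan is to reduce the statement to two minimal corner cases, $S_{3,6}$ and $S_{4,5}$, treat those by applying the non-absolutely Koszul criterion promised in the abstract to a concrete module produced by computer algebra, and then propagate the conclusion to the entire range of $(m,n)$ in the theorem via an inheritance argument along algebra retracts.

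For the propagation step, the key observation is that whenever $m \le m'$ and $n \le n'$, the Segre product $S_{m,n}$ is an algebra retract of $S_{m',n'}$: the inclusion sends the generator $x_i y_j$ of $S_{m,n}$ to the corresponding generator of $S_{m',n'}$, while the projection $S_{m',n'} \twoheadrightarrow S_{m,n}$ kills every generator $x_i y_j$ with $i>m$ or $j>n$, and these maps compose to the identity on $S_{m,n}$. I would then invoke (or, if it is not already available in the form needed, derive directly from the criterion) the fact that absolute Koszulness descends to algebra retracts: if $R$ is an algebra retract of $S$ and $S$ is absolutely Koszul, then so is $R$. Contrapositively, a non-absolutely Koszul $S_{m,n}$ forces every $S_{m',n'}$ with $m' \ge m$ and $n' \ge n$ to be non-absolutely Koszul as well. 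This reduces the proof to the two corner cases $(m,n)=(3,6)$ and $(m,n)=(4,5)$, since every pair $(m,n)$ listed in the statement dominates one of these in the partial order on $\{(m,n):m\le n\}$.

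For the base cases, I would exhibit, for each of $S_{3,6}$ and $S_{4,5}$, a finitely generated graded module $M$ to which the criterion from the earlier section applies and certifies $\lind_R M = \infty$; natural candidates are cyclic modules defined by short lists of generic linear or quadratic forms, or suitable Koszul cycles, and the verification then amounts to a finite Macaulay2 computation of a bounded number of steps of the minimal graded free resolution of $M$ (the criterion should only demand finitely many Betti data together with a combinatorial pattern). The final assertion that these are the only non-absolutely Koszul $S_{m,n}$ follows by combining what we have just proved with Proposition \ref{prop_Segre_positive}, which handles exactly the complementary range $m\le 2$, $(3,n)$ with $n\le 5$, and $(4,4)$. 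The main obstacle is the base-case verification itself: both rings have $18$ and $20$ generators respectively, with many quadratic relations, so the resolution computations can become expensive in memory and time, and one must choose the test module $M$ carefully so that the criterion is triggered within a feasible number of homological degrees. By contrast, the retract inheritance is a clean formal argument once the criterion is set up, since such criteria are typically stable under base change along an algebra retract.
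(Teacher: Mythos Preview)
Your reduction to the corner cases $(3,6)$ and $(4,5)$ via algebra retracts is exactly the paper's argument: the paper's proof of Theorem~\ref{thm_Segre_negative} is one line, invoking Lemma~\ref{lem_retract} together with the retract $S_{m,n}\to S_{m',n'}$ for $m\le m'$, $n\le n'$, after Propositions~\ref{prop_S36} and~\ref{prop_S45} dispose of the two base cases.

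Where your sketch diverges from the paper is in the treatment of the base cases, and here there is a genuine missing idea. The paper does \emph{not} work directly in $S_{3,6}$ or $S_{4,5}$; instead it first passes to an artinian reduction $U=R/J$ modulo a maximal regular sequence of linear forms (embedding dimension drops from $18$ to $10$, and from $20$ to $12$), and for $S_{4,5}$ it goes further to a quotient $W=U/(a_{12})$ of embedding dimension $11$, after building a Koszul filtration to certify $\reg_R W=0$. Only then does the criterion (Corollary~\ref{cor_conjpair}) get applied. This reduction is essential: it is what Theorem~\ref{thm_Koszulmap} is for, and without it the computations you flag as ``expensive'' are not merely slow but effectively infeasible. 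Moreover, the criterion is not about reading off a pattern from finitely many steps of a resolution, nor does it respond to \emph{generic} linear or quadratic forms: one must exhibit two very specific linear forms $l_1,l_2$ with $l_1l_2=0$ such that $(0):l_1=K+(l_2)$ and $(0):l_2=K+(l_1)$ are Betti splittings with $K$ not generated in degree~$1$. The paper emphasizes that finding such $l_1,l_2$ required an extensive directed search; a generic choice will not satisfy these colon-ideal constraints. So while your high-level plan is correct, the actual execution of the base cases needs both the artinian-reduction step and the precise form of the criterion, neither of which your proposal supplies.
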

We do not know whether the Veronese ring $k[x_1,\ldots,x_n]^{(c)}$ where $c\ge 6$ and $n\ge 4$ is absolutely Koszul or not. In our opinion, studying further such Veronese rings is necessary for a complete answer to Question \ref{quest_VeroneseSegre} and is an interesting challenge itself.  

Our proofs of the main results base on a criterion for non-absolutely Koszul rings; see Lemma \ref{lem_conjpair_modules} and Corollary \ref{cor_conjpair}. The idea of the criterion is simple. Assume that $R$ is a standard graded $k$-algebra, and $l_1$ is a linear form. Then the ideal $M=(l_1)$ has positive linearity defect if its first syzygy module, namely $(0:l_1)(-1)$ is not generated in degree 2, namely $(0):l_1$ is not generated by linear forms. Roughly speaking, Corollary \ref{cor_conjpair} says that $M$ has infinite linearity defect if, moreover, the second syzygy module of $M$ has a direct summand which is isomorphic to a shifted copy of $M$: the non-linearity of the first syzygy module will ``propagate" throughout the minimal free resolution of $M$. Corollary \ref{cor_conjpair} is loosely spoken ``opposite'' to a criterion for absolutely Koszul rings via exact pairs of zero divisors due to Henriques and \c{S}ega \cite{HS}; see Remark \ref{rem_exactzerodivisors}. In order to use our criterion for non-absolutely Koszul rings for proving Theorems \ref{thm_Veronese_negative} and \ref{thm_Segre_negative}, we rely on extensive search with Macaulay2 \cite{GS}; the strategy of our search is described at the beginning of Section \ref{sect_Segre}. Such costly computations are necessary to compensate for the non-constructive nature of the criterion, i.e. its silence on how to construct the linear form $l_1$. It would be interesting to seek for more conceptual proofs of the main results of this paper. 

The paper is organized as follows. After a background section, we present the afore-mentioned criterion for non-absolutely Koszul rings in Section \ref{sect_criterion}. Applications of this criterion to the non-absolute Koszulness of certain Segre products and Veronese subrings are presented in Sections \ref{sect_Segre} and \ref{sect_Veronese}. We close the paper by discussing some open problems. 

\section{Background}
\label{sect_background}

We assume familiarity with standard terminology and knowledge of commutative algebra, for which the books \cite{BH} and \cite{Eis} serve as good references.
\subsection{Linearity defect}
We recall the notion of linearity defect which Herzog and Iyengar introduced in \cite{HIy}, based in the notion of the linear part of a minimal free resolution. The linear part appeared in work of Herzog et al.~\cite[Section 5]{HSV} and Eisenbud et al.~\cite{EFS}. Let $(R,\mm,k)$ be a noetherian local ring, and $M$ a finitely generated $R$-module. Denote $\gr_{\mm} R=\oplus_{i\ge 0}\mm^i/\mm^{i+1}$ the associated graded ring of $R$ with respect to the $\mm$-adic filtration. Let the minimal free resolution of $M$ be
\[
F: \cdots \xrightarrow{\partial} F_i \xrightarrow{\partial} F_{i-1} \xrightarrow{\partial} \cdots \xrightarrow{\partial} F_1 \xrightarrow{\partial} F_0 \to 0.
\]
Since $F$ is minimal, it admits a filtration $\cdots \subseteq \Fc^iF \subseteq \Fc^{i-1}F\subseteq \cdots \subseteq \Fc^0F=F$ as follows:
\[
\Fc^iF: \cdots \to  F_j \to F_{j-1} \to \cdots \to F_i \to \mm F_{i-1} \to \cdots \to \mm^{i-1}F_1 \to \mm^iF_0 \to 0.
\]
The associated graded complex 
$$
\linp^R F=\bigoplus_{i\ge 0} \frac{\Fc^iF}{\Fc^{i+1}F}
$$ 
is called the linear part of $F$. Clearly $\linp^R F$ is a complex of graded $\gr_{\mm}R$-modules with 
\[
(\linp^R F)_i=(\gr_{\mm}F_i)(-i).
\]
The {\it linearity defect} of $M$ is 
\[
\lind_R M=\sup\{i: H_i(\linp^R F)\neq 0\}.
\]
By convention, if $M=0$, we set $\lind_R M=0$. When $\lind_R M=0$, we say that $M$ is a {\it Koszul module}. With appropriate changes, the above constructions also apply if $R$ is a standard graded $k$-algebra with the graded maximal ideal $\mm$, and $M$ a finitely generated graded $R$-module. 

The linearity defect can be characterized in terms of maps between Tor modules.
\begin{thm}[\c{S}ega, {\cite[Theorem 2.2]{Se}}]
\label{thm_Sega}
Let $d\ge 0$ be an integer. The following are equivalent:
\begin{enumerate}[\quad \rm (1)]
\item $\lind_R M\le d$,
\item The map $\Tor^R_i(R/\mm^{q+1},M)\to \Tor^R_i(R/\mm^q,M)$ induced by the canonical surjection $R/\mm^{q+1}\to R/\mm^q$ is zero for all $i>d$ and all $q\ge 0$.
\end{enumerate}
\end{thm}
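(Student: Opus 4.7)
The plan is to compute the homology of the linear part $\linp^R F$ in each internal ($\mm$-adic) degree $p$ separately, and to relate it to the long exact sequence coming from the short exact sequence of complexes
$$0 \to \mm^q F/\mm^{q+1} F \to F/\mm^{q+1} F \to F/\mm^q F \to 0,$$
whose first term has zero differential thanks to the minimality identity $\partial(\mm^q F_i) \subseteq \mm^{q+1} F_{i-1}$. A direct unwinding shows that the internal-degree-$p$ strand $L^p$ of $\linp^R F$ is the complex with $L^p_i = \mm^{p-i} F_i/\mm^{p-i+1} F_i$ for $i \le p$ (and zero for $i > p$), with differential $\bar\partial$ induced by $\partial$. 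A cycle of $L^p$ at position $i$ is thus represented by $x \in \mm^{p-i} F_i$ with $\partial x \in \mm^{p-i+2} F_{i-1}$, and a boundary by $\partial w$ for some $w \in \mm^{p-i-1} F_{i+1}$.

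For $(1) \Rightarrow (2)$ I would argue by induction on $q$, fixing $i > d$. Suppose $x \in F_i$ with $\partial x \in \mm^{q+1} F_{i-1}$; by the inductive hypothesis (the case $q=0$ being trivial) one has $x = \partial y + x'$ with $y \in F_{i+1}$ and $x' \in \mm^{q-1} F_i$. Then $\partial x' = \partial x \in \mm^{q+1} F_{i-1}$ exhibits $[x']$ as a cycle of $L^{q-1+i}$ at position $i$, and vanishing of $H_i(L^{q-1+i})$ supplies $w \in F_{i+1}$ with $x' - \partial w \in \mm^q F_i$, whence $x \in \mm^q F_i + \partial F_{i+1}$. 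The long exact sequence of step one translates this containment into vanishing of $\Tor^R_i(R/\mm^{q+1}, M) \to \Tor^R_i(R/\mm^q, M)$.

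For $(2) \Rightarrow (1)$, take a cycle $[x] \in L^p_i$ with $i > d$, so $x \in \mm^{p-i} F_i$ and $\partial x \in \mm^{p-i+2} F_{i-1}$. Applying the hypothesis with $q = p-i+1$ yields $x = z + \partial y$ with $z \in \mm^{p-i+1} F_i$ and $y \in F_{i+1}$; then $\partial y = x-z \in \mm^{p-i} F_i$, so $y$ represents a class in $\Tor^R_{i+1}(R/\mm^{p-i}, M)$. When $p > i$, applying the hypothesis at level $i+1$ replaces $y$ by some $w \in \mm^{p-i-1} F_{i+1}$ with $\partial w = \partial y$, so that $[x] = [\partial w]$ is a boundary in $L^p$. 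The borderline case $p = i$ reduces directly to the hypothesis at $q = 1$, which says precisely that $\partial x \in \mm^2 F_{i-1}$ forces $x \in \mm F_i$.

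The main technical obstacle is the coordinated diagram-chase in the converse direction, where one must invoke the Tor-map hypothesis at two consecutive homological positions ($i$ and $i+1$) in order to lift $y$ into the correct step of the $\mm$-adic filtration, and where the borderline $p = i$ has to be handled on its own. A conceptual underpinning of the whole argument is that the linear-part differential $\bar\partial$ and the connecting map in the Tor long exact sequence are both induced by the same map $\partial$, so they fit into one diagram up to shifts of the filtration index.
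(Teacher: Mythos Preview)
The paper does not supply its own proof of this statement; Theorem~\ref{thm_Sega} is quoted from \c{S}ega \cite[Theorem~2.2]{Se} as an external result, so there is nothing in the present paper to compare your argument against.

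That said, your proof is correct and is essentially the standard argument (and, to my knowledge, the one \c{S}ega gives): one identifies the degree-$p$ strand of $\linp^R F$ with the complex $L^p_i=\mm^{p-i}F_i/\mm^{p-i+1}F_i$, and reads the vanishing of $H_i(L^p)$ off the long exact sequence attached to $0\to \mm^qF/\mm^{q+1}F\to F/\mm^{q+1}F\to F/\mm^qF\to 0$, using that the first term has zero differential by minimality. Two cosmetic points you glossed over but which are routine: in the borderline case $p=i$ of $(2)\Rightarrow(1)$, the hypothesis at $q=1$ literally gives $x\in \mm F_i+\partial F_{i+1}$, and one then needs minimality $\partial F_{i+1}\subseteq \mm F_i$ to conclude $x\in \mm F_i$; and in the inductive step of $(1)\Rightarrow(2)$ at $q=1$ one has $L^{i}_{i+1}=0$, so $H_i(L^i)=0$ forces $[x']=0$ directly and one may take $w=0$.
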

\begin{rem}
A substantial simplification of Theorem \ref{thm_Sega} for $R$ being a standard graded $k$-algebra and $M$ a finitely generated graded $R$-module was discovered by Katth\"an \cite[Theorem 1.3]{K}.
\end{rem}
Using Theorem \ref{thm_Sega}, we can prove the following useful statements.
\begin{prop}[{\cite[Corollary 2.10]{Ng}} and {\cite[Proposition 4.3]{NgV}}]
\label{prop_lindef}
Let $0\to M\xrightarrow{\phi} P\to N \to 0$ be a short exact sequence of finitely generated $R$-modules.
\begin{enumerate}[\quad \rm (1)]
\item If $P$ is free then $\lind_R M=\lind_R N-1$ if $\lind_R N\ge 1$ and $\lind_R M=0$ if $\lind_R N=0$.
\item Assume that $\Tor^R_i(k,\phi)=0$ for all $i\ge 0$. Then there are inequalities:
\begin{align*}
\lind_R M &\le \max\{\lind_R P, \lind_R N-1\},\\
\lind_R P &\le \max\{\lind_R M, \lind_R N\},\\
\lind_R N &\le \max\{\lind_R M+1,\lind_R P\}.
\end{align*}
\end{enumerate}
\end{prop}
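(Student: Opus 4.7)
For part (1), my plan is to reduce to the case where $M$ is the first syzygy $\Omega N$. Since $P$ is free and $P \to N$ is surjective, $P$ decomposes as $F^N_0 \oplus Q$ where $F^N_0 \to N$ is the minimal free cover and $Q$ is free; a Schanuel-type comparison then yields $M \cong \Omega N \oplus Q$. Because $\lind_R$ vanishes on free modules and is additive over direct summands, $\lind_R M = \lind_R \Omega N$. Moreover, the minimal free resolution of $\Omega N$ is obtained from that of $N$ by deleting $F^N_0$ and shifting homological degrees down by one, so $H_i(\linp^R F^{\Omega N}) \cong H_{i+1}(\linp^R F^N)$ up to an internal twist. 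Both assertions of (1) follow at once.

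For part (2), the key idea is to promote the Tor-vanishing hypothesis to a structural statement about the minimal free resolution of $N$. The condition $\Tor^R_i(k,\phi) = 0$ for all $i \ge 0$ is equivalent, via the equality $\Tor^R_i(k,\phi) = \tilde\phi_i \otimes k$ computed on minimal free resolutions, to requiring that every component of a lift $\tilde\phi\colon F^M \to F^P$ of $\phi$ has entries in $\mm$. Since $\phi$ is injective with cokernel $N$, the mapping cone of $\tilde\phi$ is a free resolution of $N$, and the ``entries in $\mm$'' property makes it minimal. We may therefore identify $F^N$ with this mapping cone, giving $F^N_i = F^P_i \oplus F^M_{i-1}$ as graded $R$-modules with differential
\[
\begin{pmatrix}\partial^P & \tilde\phi \\ 0 & -\partial^M\end{pmatrix}.
\]

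Because the decomposition $F^N_i = F^P_i \oplus F^M_{i-1}$ respects the filtrations $\{\Fc^j F\}$ defining linear parts, passing to the associated graded quotients produces a short exact sequence of complexes of $\gr_{\mm} R$-modules
\[
0 \to \linp^R F^P \to \linp^R F^N \to \linp^R F^M[-1] \to 0,
\]
where $[-1]$ denotes homological shift. The resulting long exact sequence in homology
\[
\cdots \to H_i(\linp^R F^P) \to H_i(\linp^R F^N) \to H_{i-1}(\linp^R F^M) \to H_{i-1}(\linp^R F^P) \to \cdots
\]
delivers all three inequalities by reading off vanishing ranges. For instance, when $i > \max\{\lind_R P, \lind_R N - 1\}$ both $H_i(\linp^R F^N)$ and $H_{i-1}(\linp^R F^P)$ vanish, forcing $H_{i-1}(\linp^R F^M) = 0$; the second and third inequalities follow by symmetric bookkeeping on the same long exact sequence. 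The main delicate point is the translation from Tor-vanishing to minimality of the mapping cone, combined with verifying that the filtration on $F^N$ really splits as the direct sum of the filtrations on $F^P$ and $F^M[-1]$ with the correct internal twists; this compatibility is what guarantees that the linear-part functor is exact on our specific short exact sequence, and is the step I expect to require the most care.
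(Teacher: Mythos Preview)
Your argument is correct. Note, however, that the paper does not actually supply a proof of this proposition: it cites \cite[Corollary~2.10]{Ng} and \cite[Proposition~4.3]{NgV}, and only remarks that the statements can be obtained ``using Theorem~\ref{thm_Sega}'', i.e.\ \c{S}ega's characterization of linearity defect via the vanishing of the maps $\Tor^R_i(R/\mm^{q+1},M)\to \Tor^R_i(R/\mm^q,M)$. So the intended route in the paper (and in the cited sources) is to run the long exact sequences of $\Tor^R_\bullet(R/\mm^q,-)$ associated with the given short exact sequence, compare them along the surjections $R/\mm^{q+1}\to R/\mm^q$, and read off the vanishing ranges from \c{S}ega's criterion; the hypothesis $\Tor^R_i(k,\phi)=0$ enters to control how the long exact sequence splits.

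Your approach bypasses \c{S}ega's theorem entirely and works directly with the definition of the linear part. The key structural observation---that $\Tor^R_i(k,\phi)=0$ for all $i$ forces the mapping cone of a lift $\tilde\phi\colon F^M\to F^P$ to be the \emph{minimal} free resolution of $N$---is standard and correct, and it immediately yields the degreewise-split short exact sequence $0\to F^P\to F^N\to F^M[-1]\to 0$. Because this sequence is split in each homological degree, applying $\gr_\mm(-)(-i)$ termwise preserves exactness, and one obtains the short exact sequence of linear parts (up to the internal twist you flagged). The three inequalities then drop out of the long exact homology sequence exactly as you describe. For part~(1) your Schanuel reduction to $M\cong \Omega N\oplus Q$ is fine over a local ring (Krull--Schmidt/cancellation), and the identification $H_i(\linp^R F^{\Omega N})\cong H_{i+1}(\linp^R F^N)$ (up to twist) is immediate from the shift of resolutions; just take a moment to note that the case $i=0$ requires a separate word, since there one only gets a surjection onto $H_1(\linp^R F^N)$, which is nonetheless enough for both assertions.

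In short: the paper's route is via the Tor--map criterion of \c{S}ega, while yours is a direct mapping-cone argument on linear parts. Your approach is arguably more transparent, since it makes visible \emph{why} the Tor-vanishing hypothesis is exactly what is needed (it is equivalent to minimality of the cone), and it avoids invoking an external characterization. The trade-off is that \c{S}ega's criterion, once available, makes the bookkeeping marginally shorter and handles both the local and graded cases uniformly without having to track internal twists.
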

\subsection{Koszul algebras and regularity}
Let $R$ be a standard graded $k$-algebra. Namely $R$ is an $\N$-graded commutative algebra with $R_0=k$ such that $R$ is generated by finitely many elements of degree 1 over $R_0$. Let $M$ be a finitely generated graded $R$-module. The {\it regularity} of $M$ over $R$ is 
\[
\reg_R M=\sup\{j-i: \Tor^R_i(k,M)_j\neq 0\}.
\]
We say that $M$ has a {\it linear resolution} over $R$ if there exists $d\in\Z$ such that $\Tor^R_i(k,M)_j=0$ for all $j\neq i+d$. (We also say that $M$ has a $d$-linear resolution in that case.) Modules with linear resolutions are typical examples of Koszul modules. 

We say that $R$ is a {\it Koszul algebra} if the residue field $k$ has a linear resolution over $R$. By R\"omer's \cite[Theorem 3.2.8]{Ro}, which extends Yanagawa's \cite[Proposition 4.9]{Yan}, if $R$ is a Koszul algebra, then $M$ is a Koszul module if and only if for each $d\in \Z$, the submodule $M_{\left<d\right>}$ of $M$ generated by elements of degree $d$ has a $d$-linear resolution. 

A collection $\Fc$ of ideals generated by linear forms of $R$ is a {\it Koszul filtration} \cite{CTV} if the following conditions are satisfied:
\begin{enumerate}
\item $(0),\mm \in \Fc$.
\item For every $I\neq (0)$ in $\Fc$, there exist an ideal $J\in \Fc$ properly contained in $I$ such that $I/J$ is a cyclic module and $J:I\in \Fc$.
\end{enumerate}
If $R$ has a Koszul filtration $\Fc$, then any ideal $I$ of $\Fc$ has a linear resolution over $R$. In particular, $R$ is a Koszul algebra. 

It is useful to recall the following folkloric criterion for a module to have a linear resolution. Let $R$ be a standard graded $k$-algebra, and $M$ a finitely generated graded module. We say that $M$ has {\it linear quotients} if there exist minimal homogeneous generators $m_1,\ldots,m_s$ of $M$ such that for all $i=0,1,\ldots,s-1$, the ideal $(m_1,\ldots,m_{i-1}):m_i$ has a $1$-linear resolution over $R$. 
\begin{lem}
\label{lem_linearquotient}
Let $R$ be a standard graded $k$-algebra, and $M$ a finitely generated graded module with linear quotients. Assume that $M$ is generated by elements of the same degree $d$. Then $M$ has a $d$-linear resolution.
\end{lem}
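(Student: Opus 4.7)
The plan is to induct on $s$, the number of generators in the linear-quotient ordering, with the case $s=0$ trivial and the case $s=1$ following because then $M\cong R(-d)$. For the induction step, I would set $M_i=(m_1,\ldots,m_i)$, so that there is a short exact sequence
\[
0 \to M_{i-1} \to M_i \to M_i/M_{i-1} \to 0.
\]
Since $m_i$ has degree $d$ and $(m_1,\ldots,m_{i-1}):m_i = J_i$ is the colon ideal that, by hypothesis, has a $1$-linear resolution, I identify $M_i/M_{i-1}$ with the cyclic module $(R/J_i)(-d)$.

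Next I would translate the hypothesis on $J_i$ into information about $R/J_i$. Using the short exact sequence $0\to J_i\to R\to R/J_i\to 0$ and the long exact $\Tor$ sequence, the fact that $J_i$ has a $1$-linear resolution forces $\Tor^R_j(k,R/J_i)$ to be concentrated in internal degree $j$ for every $j\ge 0$, i.e.\ $R/J_i$ has a $0$-linear resolution. Shifting by $d$, the quotient $M_i/M_{i-1}\cong (R/J_i)(-d)$ has a $d$-linear resolution. By induction, $M_{i-1}$ also has a $d$-linear resolution.

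Finally I would finish the induction step by feeding these two linear-resolution facts into the long exact sequence
\[
\cdots \to \Tor^R_j(k,M_{i-1})_\ell \to \Tor^R_j(k,M_i)_\ell \to \Tor^R_j(k,M_i/M_{i-1})_\ell \to \cdots
\]
for each homological degree $j$ and each internal degree $\ell\neq j+d$. Both flanking terms vanish in that range, so the middle term vanishes as well, which gives $\reg_R M_i\le d$, while $M_i$ is generated in degree $d$ by construction. Hence $M_i$ has a $d$-linear resolution, closing the induction when $i=s$.

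The only mild subtlety, and the one place where I would be careful, is the bookkeeping of the degree shifts: one must be sure that the $1$-linear hypothesis on $J_i$ precisely yields $0$-linearity of $R/J_i$, and then that the further shift by $d$ coming from $m_i\in M_d$ matches the prescribed generating degree of $M$. There is no other obstacle, and in particular no Koszulness of $R$ needs to be assumed for this argument, only the linear-quotient hypothesis.
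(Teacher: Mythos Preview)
Your approach is essentially the same induction the paper gestures at with its single sentence ``an easy induction shows that $(m_1,\ldots,m_j)$ has $d$-linear resolution for all $j$.'' One small slip: for $s=1$ you write $M\cong R(-d)$, but in general $(m_1)\cong (R/J_1)(-d)$ with $J_1=(0):m_1$; this is harmless since your induction step, applied at $i=1$ from the trivial base $M_0=0$, already establishes the $d$-linear resolution of $M_1$ correctly, so the separate $s=1$ base case is redundant.
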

\begin{proof}
Assume that $M$ is minimally generated by $m_1,\ldots,m_s$, all of degree $d$, and $I_j=(m_1,\ldots,m_{j-1}):m_j$ has a $1$-linear resolution for all $j=1,\ldots,s$. An easy induction shows that $(m_1,\ldots,m_j)$ has $d$-linear resolution for all $j=1,\ldots,s$.
\end{proof}

\subsection{Absolutely Koszul rings}
Let $(R,\mm,k)$ be a noetherian local ring with the unique maximal ideal $\mm$, or a standard graded $k$-algebra with the graded maximal ideal $\mm$. Herzog and Iyengar \cite[Proposition 1.8]{HIy} pro\-ved that if $M$ is a finitely generated (graded) $R$-module such that $\lind_R M<\infty$, then $P^R_M(t)$ is a rational function with constant denominator (depending only on $R$).
We say that $R$ is {\it absolutely Koszul} if $\lind_R M<\infty$ for every finitely generated (graded) $R$-module $M$. If $R$ is a graded absolutely Koszul algebra, then it is Koszul and good in the sense of Roos \cite{Roos05}. 

For the remaining of this section, we will restrict ourselves to the graded case. The following two results are useful to reduce the embedding dimension when proving that certain ring is not absolutely Koszul. The first was stated for local rings but its graded analog is immediate.
\begin{thm}[{\cite[Theorem 5.2]{Ng}}]
\label{thm_Koszulmap}
Let $R\to S$ be a surjection of standard graded $k$-algebras such that $S$ has a linear resolution as an $R$-module. Let $N$ be a finitely generated graded $S$-module. Then there is an equality $\lind_S N=\lind_R N$. In particular, if $R$ is an absolutely ring then so is $S$.
\end{thm}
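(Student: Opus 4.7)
The plan is to compare minimal graded $R$- and $S$-free resolutions of $N$ using the hypothesis that $S$ has a linear $R$-resolution. Two natural approaches present themselves: (i) directly constructing the minimal $R$-resolution of $N$ out of the minimal $S$-resolution of $N$ and the minimal $R$-resolution of $S$, and (ii) using \c{S}ega's characterization of linearity defect (Theorem \ref{thm_Sega}) to compare Tor-maps over $R$ and $S$, exploiting that $\mm S = \mm_S$ where $\mm$ denotes the graded maximal ideal of $R$. I would favour the direct approach.

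Concretely, let $F^S$ be a minimal graded $S$-free resolution of $N$ and write $F^S_q = \bigoplus_j S(-j)^{b_{qj}}$. Let $G$ be the minimal graded $R$-free resolution of $S$; by hypothesis $G$ is linear, so $G_p = R(-p)^{c_p}$ for some integers $c_p \ge 0$. For each $q$, a minimal graded $R$-free resolution of $F^S_q$ is then $\Gamma_q := \bigoplus_j G(-j)^{b_{qj}}$. I would lift the $S$-linear boundaries $F^S_q \to F^S_{q-1}$ to $R$-linear chain maps $\Gamma_q \to \Gamma_{q-1}$ and form the total complex $\widetilde{F}$ of the resulting first-quadrant double complex. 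The spectral sequence of the column filtration, whose $E^2$-page collapses to $N$ in bidegree $(0,0)$, shows that $\widetilde{F}$ is an $R$-free resolution of $N$.

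Provided $\widetilde{F}$ is minimal, its linear part $\linp^R \widetilde{F}$ inherits the double-complex structure. Because each column $\Gamma_q$ is a direct sum of shifts of the linear complex $G$, the vertical homology of $\linp^R \widetilde{F}$ recovers (after a canonical regrading) the linear part $\linp^S F^S$, and the induced horizontal differential matches that of $\linp^S F^S$. Taking total homology would then yield an isomorphism $H_i(\linp^R \widetilde{F}) \cong H_i(\linp^S F^S)$ for all $i \ge 0$, giving $\lind_R N = \lind_S N$. The \emph{in particular} statement is then immediate: any finitely generated graded $S$-module is also finitely generated and graded over $R$, so absolute Koszulness descends from $R$ to $S$.

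The main obstacle is the minimality of $\widetilde{F}$. The vertical differentials are automatically minimal (entries in $\mm$) since $G$ is, and the base-level horizontal lifts $\Gamma_{q,0} \to \Gamma_{q-1,0}$ can be chosen with entries in $\mm$ by lifting entries of the $S$-boundaries from $\mm S$ to $\mm$. Propagating this minimality upward through the columns is the delicate point: chain-map lifts between minimal free resolutions need not be minimal in general, and here the linearity of $G$ (not merely its finiteness or Koszulness) is precisely what makes the upward propagation go through. Should the direct route prove recalcitrant, I would fall back on a spectral-sequence comparison of $\Tor^R_\bullet(R/\mm^q, N)$ and $\Tor^S_\bullet(S/\mm_S^q, N)$, using that $\Tor^R_\bullet(k, S)$ is concentrated on the diagonal, and combine it with Theorem \ref{thm_Sega} to equate the two vanishing conditions.
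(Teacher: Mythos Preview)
The paper does not prove this theorem: it is quoted verbatim from \cite[Theorem~5.2]{Ng}, with the remark that the graded analogue is immediate from the local statement proved there. So there is no in-paper proof to compare against.

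That said, your plan (i) is essentially the standard way such a result is established, and the architecture is correct. A few comments on the points you flag. Minimality of the total complex $\widetilde F$ can be obtained cleanly without chasing lifts: once you know $\widetilde F$ is \emph{some} graded free $R$-resolution of $N$, it is minimal if and only if $\operatorname{rank}\widetilde F_i=\dim_k\Tor^R_i(k,N)$ for all $i$. The change-of-rings spectral sequence $E^2_{p,q}=\Tor^S_p\bigl(\Tor^R_q(k,S),N\bigr)\Rightarrow\Tor^R_{p+q}(k,N)$ has $E^2_{p,q}=\Tor^S_p(k,N)(-q)^{c_q}$ because $G$ is linear; checking that the differentials $d_r$ vanish for degree reasons (the internal grading forces it, since $d_r$ would have to hit a summand shifted by a different~$q$) gives $\beta^R_i(N)=\sum_{p+q=i}\beta^S_p(N)\,c_q$, which is exactly $\operatorname{rank}\widetilde F_i$. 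This sidesteps the inductive minimality argument you were worried about.

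For the linear-part comparison, your outline is right but one identification deserves care. Because $G$ is linear, $\linp^R G$ is an acyclic complex of $\gr_\mm R$-modules resolving $\gr_{\mm_S}S$; hence the vertical homology of $\linp^R\widetilde F$ in column $q$ is $\gr_{\mm_S}(F^S_q)$, concentrated in row $0$. The induced horizontal differential is then the image in $\gr_{\mm_S}S$ of the degree-one part of the lifted maps, which is precisely the differential of $\linp^S F^S$. The spectral sequence of the double complex $\linp^R\widetilde F$ therefore degenerates at $E^2$ and gives $H_i(\linp^R\widetilde F)\cong H_i(\linp^S F^S)$, as you claim. Your fallback (ii) via Theorem~\ref{thm_Sega} would also work, but is heavier to execute than the direct comparison once minimality is in hand.
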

Recall that a homomorphism of standard graded $k$-algebras $S\xrightarrow{\theta} R$ is called an {\it algebra retract} if there is a homomorphism of graded $k$-algebras $R\xrightarrow{\phi} S$ such that $ \phi \circ \theta=\id_S$. 
\begin{lem}
\label{lem_retract}
Let $S\xrightarrow{\theta} R$ be an algebra retract of standard graded $k$-algebras. If $R$ is absolutely Koszul then so is $S$. 
\end{lem}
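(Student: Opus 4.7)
The strategy is to reduce the statement to Theorem \ref{thm_Koszulmap} applied to the surjection $\phi\colon R \twoheadrightarrow S$. Given any finitely generated graded $S$-module $N$, I regard $N$ as a graded $R$-module via $\phi$; since $R$ is absolutely Koszul, $\lind_R N < \infty$. If the hypothesis of Theorem \ref{thm_Koszulmap} is satisfied for $\phi$, namely if $S$ has a linear minimal free resolution as an $R$-module, then $\lind_S N = \lind_R N < \infty$, and so $S$ is absolutely Koszul.

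To verify this hypothesis I would exploit the retract structure directly. From $\phi \circ \theta = \id_S$ one obtains a direct sum decomposition $R = \theta(S) \oplus \ker \phi$ of graded $S$-modules (where $R$ is an $S$-module via $\theta$), and in particular $R_1 = \theta(S_1) \oplus (\ker \phi)_1$. A short induction on internal degree, using that $R$ is standard graded and that $\theta$ respects products, then shows that $\ker \phi$ is generated as an $R$-ideal by $(\ker \phi)_1$. To upgrade generation in degree one to linearity of the full resolution, I would build a Koszul filtration $\Fc$ of $R$ containing $\ker \phi$: starting from the family of ideals generated by subsets of a basis of $R_1$ adapted to the splitting $R_1 = \theta(S_1) \oplus (\ker \phi)_1$, the Koszul property of $R$ (which is guaranteed by absolute Koszulness) together with the section $\theta$ should ensure that the necessary colon ideals $J \colon I$ remain within $\Fc$. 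By the Koszul filtration criterion recalled in Section \ref{sect_background}, the ideal $\ker \phi$ would then have a $1$-linear resolution over $R$, and consequently $S = R/\ker \phi$ would have a $0$-linear resolution.

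The principal obstacle is producing the Koszul filtration of $R$ containing $\ker \phi$: this is a structural property of the pair $(R, \ker \phi)$ coming purely from the retract and independent of $N$, but verifying that the relevant colon ideals are again generated by subsets of the adapted basis requires care. Once this step is carried out, Theorem \ref{thm_Koszulmap} immediately yields $\lind_S N = \lind_R N < \infty$ for every finitely generated graded $S$-module $N$, completing the proof.
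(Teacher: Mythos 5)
The paper states Lemma \ref{lem_retract} without proof, so there is no argument of the author's to compare against; judged on its own terms, your proposal has a genuine gap at its central step. The reduction to Theorem \ref{thm_Koszulmap} is a reasonable idea, and your observation that $\Ker\phi$ is generated by $(\Ker\phi)_1$ is correct (the induction on internal degree using $R_q=\theta(S_q)+R_{q-1}(\Ker\phi)_1$ and $\phi\circ\theta=\id_S$ works). But generation in degree one is very far from $\reg_R S=0$, which is what Theorem \ref{thm_Koszulmap} actually requires, and the mechanism you propose to bridge this --- a Koszul filtration of $R$ built from subsets of a basis of $R_1$ adapted to the splitting --- is not available in general. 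That would amount to $R$ being strongly Koszul with respect to a chosen basis, a property much stronger than (absolute) Koszulness: for a Koszul algebra, a colon ideal $J:I$ of ideals generated by linear forms need not be generated by linear forms at all, let alone by a subset of a prescribed basis. The phrase ``should ensure that the necessary colon ideals remain within $\Fc$'' is precisely where the content of the lemma lies, and nothing in the retract structure supplies it. (The assertion that $S$ has a linear $R$-resolution is in fact true, but the standard route is Levin's theory of large homomorphisms: a split surjection $\phi$ is large, so $\Tor^R(k,k)\cong\Tor^S(k,k)\otimes_k\Tor^R(S,k)$ as bigraded vector spaces, and Koszulness of $R$ then forces $\Tor^R_i(S,k)_j=0$ for $j\neq i$. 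If you want to salvage your reduction, this is the ingredient to cite, not a Koszul filtration.)

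There is also a more elementary argument that bypasses the question of $\reg_R S$ entirely and works directly from \c{S}ega's criterion (Theorem \ref{thm_Sega}). Let $N$ be a finitely generated graded $S$-module, viewed as an $R$-module via $\phi$. Since $\theta(\nn^q)\subseteq\mm^q$ and $\phi(\mm^q)S=\nn^q$, the change-of-rings maps along $\theta$ and $\phi$ give, for each $i$ and $q$, natural morphisms
\[
\Tor^S_i(S/\nn^{q},N)\longrightarrow \Tor^R_i(R/\mm^{q},N)\longrightarrow \Tor^S_i(S/\nn^{q},N)
\]
whose composite is the identity (the composite ring map is $\id_S$ and the composite coefficient map $S/\nn^q\to R/\mm^q\to S/\nn^q$ is the identity), and which commute with the maps induced by the surjections $q+1\to q$. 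Hence the map $\Tor^S_i(S/\nn^{q+1},N)\to\Tor^S_i(S/\nn^q,N)$ is a retract of $\Tor^R_i(R/\mm^{q+1},N)\to\Tor^R_i(R/\mm^q,N)$, and vanishing of the latter for $i>d$ forces vanishing of the former. This gives $\lind_S N\le\lind_R N<\infty$ directly. I recommend either adopting this argument or invoking Levin's theorem; as written, your proof is incomplete.
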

We say that $R$ has {\it the Backelin--Roos property} if there exists a Golod map of standard graded $k$-algebras $Q\to R$ where the defining ideal of $Q$ is generated by a regular sequence (see \cite{Avr, Le}). If $R$ is Koszul, then by \cite[Proposition 5.8]{HIy}, $R$ has the Backelin--Roos property if and only if there exists a map $Q\xrightarrow{\phi} R$, where $Q$ is defined by a regular sequence of quadratic polynomials, and $\Ker \phi$  has $2$-linear resolution over $Q$. By \cite[Theorem 5.9]{HIy}, if $R$ is Koszul with the Backelin--Roos property, then $R$ is absolutely Koszul.

The Hilbert series of $R$ is 
$$
HS_R(t)=\sum_{i=0}^{\infty}(\dim_k R_i)t^i\in \Q[[t]].
$$
By the Hilbert-Serre's theorem, we can write
$$
HS_R(t)=\frac{h_R(t)}{(1-t)^{\dim R}},
$$
where $h_R(t)\in \Z[t]$ is called the $h$-polynomial of $R$. There is an obstruction on the Hilbert function of Koszul algebras with the Backelin--Roos property.
\begin{prop}[{\cite[Proposition 3.12, Corollary 3.13]{CINR}}]
\label{prop_HSobstruction}
Assume that $R$ is a Koszul algebra with the Backelin--Roos property. Denote by $\codim R$ the codimension of $R$. Then the formal power series
\[
1- \frac{h_R(-t)}{(1-t)^{\codim R}}
\]
has only non-negative coefficients. 

Write $h_R(t)=(1+t)^sg(t)$, where $s\ge 0$ and $g(t)\in \Z[t]$ is such that $g(-1)\neq 0$. If $R$ is furthermore not a complete intersection, then $g(-1)<0$.
\end{prop}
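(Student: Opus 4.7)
The plan is to combine the Golod homomorphism formula for the Poincar\'e series with the Koszul identity $P^R_k(t) = 1/HS_R(-t)$. Using the characterization of the Backelin--Roos property for Koszul algebras from \cite[Proposition 5.8]{HIy}, I would first produce a surjection $\phi : Q \to R$ with $Q = S/(f_1,\ldots,f_c)$ a complete intersection of $c = \codim R$ quadrics in a polynomial ring $S$ of dimension $n = \embdim R$, such that $J := \Ker\phi$ has a $2$-linear resolution over $Q$. Ensuring the maximal value $c = \codim R$ (by extending a regular sequence of quadrics inside the defining ideal of $R$ through a generic choice) is the chief technical point; with it in place, $\dim Q = \dim R$, and the Tate resolution gives $P^Q_k(t) = (1+t)^{\dim R}/(1-t)^{\codim R}$.

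From the exact sequence $0 \to J \to Q \to R \to 0$ together with the $2$-linearity of $J$ one has $P^Q_R(t) = 1 + tP^Q_J(t)$, so the Golod equation combined with $P^R_k(t) = (1+t)^{\dim R}/h_R(-t)$ collapses to the key identity
\[
\frac{h_R(-t)}{(1-t)^{\codim R}} = 1 - t^2 P^Q_J(t).
\]
Part (a) is immediate from this: $t^2 P^Q_J(t)$ has non-negative coefficients, so $1 - h_R(-t)/(1-t)^{\codim R}$ does as well.

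For part (b), substitute $h_R(-t) = (1-t)^s g(-t)$ and set $e = \codim R - s$; the identity becomes $g(-t)/(1-t)^e = 1 - t^2 P^Q_J(t)$. A short analysis (comparing coefficients asymptotically, using $\beta_i^Q(J) \ge 0$ and $g(-1) \ne 0$) rules out $e \ne 0$ and yields $g(-t) = 1 - t^2 P^Q_J(t)$. In particular,
\[
g(-1) = 1 - P^Q_J(1) = 1 - \sum_{i \ge 0}\beta_i^Q(J).
\]
If $R$ is not a complete intersection, then $J \ne 0$ and $J$ cannot be principal generated by a non-zerodivisor of degree $2$ (such a generator would extend $f_1,\ldots,f_c$, forcing $\codim R \ge c+1$); hence $\sum_{i \ge 0} \beta_i^Q(J) \ge 2$ and $g(-1) \le -1 < 0$.

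The main obstacle is the initial reduction to $c = \codim R$: without it the key identity carries an extra factor of $(1+t)^{\codim R-c}$ on the left-hand side, and the clean bounds of parts (a) and (b) do not follow directly.
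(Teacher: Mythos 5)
First, a caveat on the comparison: the paper gives no proof of this proposition --- it is imported verbatim from \cite[Proposition 3.12, Corollary 3.13]{CINR} --- so there is no internal argument to measure you against. Your overall strategy (the Golod formula $P^R_k(t)=P^Q_k(t)/(1-t(P^Q_R(t)-1))$ combined with $P^R_k(t)=(1+t)^{\dim R}/h_R(-t)$ and $P^Q_k(t)=(1+t)^{\embdim R}/(1-t^2)^c$) is the right one and is what the cited source does. But two steps do not hold up as written.

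The reduction to $c=\codim R$, which you rightly call the chief technical point, is not established by the recipe you give. A generic extension of $f_1,\ldots,f_c$ to a maximal regular sequence of quadrics inside the defining ideal does produce a complete intersection $Q''$ with $\dim Q''=\dim R$, but nothing in your argument shows that the new surjection $Q''\to R$ is still Golod (nor that its kernel is still $2$-linear, though that part is not actually needed for the Poincar\'e series computation). What is required is a factorization lemma of the type: if $Q\to R$ is Golod and $g\in\Ker(Q\to R)\cap\mathfrak m_Q^2$ is $Q$-regular, then $Q/(g)\to R$ is Golod. Without it the identity you obtain is $1-(1+t)^{e-c}h_R(-t)/(1-t)^c=t^2P^Q_J(t)$ with $e=\codim R$, and the extra factor $(1+t)^{e-c}$ cannot be stripped off by a coefficientwise argument.

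Part (b) contains a step that is simply false: it is not true that $\codim R-s=0$. Take $R=k[x,y]/(x^2,xy)$. It is Koszul, and $Q=k[x,y]/(x^2)\to R$ is Golod with kernel $J=(xy)$ having the $2$-linear resolution $\cdots\xrightarrow{x}Q(-3)\xrightarrow{x}Q(-2)\to J\to 0$, so the Backelin--Roos property holds with $c=\codim R=1$. Here $h_R(t)=1+t-t^2$, so $s=0$ and $\codim R-s=1$; the key identity reads $(1-t-t^2)/(1-t)=1-t^2/(1-t)$, consistent with $P^Q_J(t)=1/(1-t)$. So your formula $g(-1)=1-\sum_i\beta_i^Q(J)$ is unavailable in general. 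The asymptotic coefficient comparison you allude to actually proves the conclusion directly rather than forcing $d:=\codim R-s$ to vanish: for $d>0$ and $N\gg 0$ the coefficient of $t^N$ in $g(-t)/(1-t)^d$ equals $-\beta_{N-2}^Q(J)\le 0$ and is asymptotic to $g(-1)N^{d-1}/(d-1)!$, whence $g(-1)\le 0$ and then $g(-1)<0$ since $g(-1)\neq 0$ (no non-complete-intersection hypothesis is needed in this case). Your treatment of the remaining case $s=\codim R$, where $g(-1)=1-\sum_i\beta_i^Q(J)\le -1$ unless $J$ is generated by a single non-zerodivisor, is correct. So part (b) is salvageable, but as written the case split is wrong.
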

\section{A criterion for non-absolutely Koszul rings}
\label{sect_criterion}
Let $(R,\mm,k)$ be a noetherian local ring. Let $M$ be a non-trivial finitely generated $R$-module and $M_1,M_2$ be non-trivial submodules such that $M=M_1+M_2$. Following \cite{FHV}, we say that the decomposition $M=M_1+M_2$ is a {\em Betti splitting} if for all $i\ge 0$, there is an equality $\beta_i(M)=\beta_i(M_1)+\beta_i(M_2)+\beta_{i-1}(M_1\cap M_2)$. Using the long exact sequence of Tor associated with the short exact sequence
\[
0\to M_1\cap M_2 \to M_1 \oplus M_2 \to M \to 0,
\]
we can easily prove the following are equivalent: 
\begin{enumerate}
\item $M=M_1+M_2$ is a Betti splitting;
\item For all $i\ge 0$, the natural maps $\Tor^R_i(k,M_1\cap M_2) \to \Tor^R_i(k, M_1)$ and $\Tor^R_i(k,M_1\cap M_2) \to \Tor^R_i(k, M_2)$ are zero.
\end{enumerate}
In the graded case, we define Betti splittings with suitable modifications. 

We record the following simple lemma, which is useful to establish graded Betti splittings.
\begin{lem}
\label{lem_zeromap}
Let $R$ be a standard graded $k$-algebra. Let $M\xrightarrow{\phi} N$ be morphism of finitely generated graded $R$-modules. Assume that $M_i=0$ for all $i\le \reg_R N$. Then $\Tor^R_i(k,M)\xrightarrow{\Tor^R_i(k,\phi)} \Tor^R_i(k,N)$ is the zero map for all $i\ge 0$.
\end{lem}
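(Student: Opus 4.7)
The plan is to compare the degrees in which $\Tor^R_i(k,M)$ and $\Tor^R_i(k,N)$ are allowed to be nonzero, and observe that they lie in disjoint ranges, so any degree-preserving map between them must vanish.

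Set $d := \reg_R N + 1$. The hypothesis $M_i = 0$ for $i \le \reg_R N$ says exactly that $M$ is concentrated in degrees $\ge d$, and in particular $M$ is minimally generated in degrees $\ge d$. First I would take a minimal graded free resolution $F_\bullet \to M$ and establish by induction on $i$ that every $F_i$ is generated in degrees $\ge d + i$. The base case $i=0$ is immediate since the generators of $F_0$ correspond to minimal generators of $M$. For the inductive step, minimality gives $\partial(F_i) \subseteq \mm F_{i-1}$, so any minimal generator of $F_i$ maps to an element of degree $\ge 1$ plus the minimal generator degree of $F_{i-1}$, which by induction is $\ge 1 + (d + i - 1) = d + i$. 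Consequently $\Tor^R_i(k,M)_j = (k \otimes_R F_i)_j$ can be nonzero only for $j \ge d + i = i + \reg_R N + 1$.

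On the $N$ side, the very definition of regularity yields $\Tor^R_i(k,N)_j = 0$ whenever $j > i + \reg_R N$. Combining these two range restrictions, for every pair $(i,j)$ at least one of $\Tor^R_i(k,M)_j$ and $\Tor^R_i(k,N)_j$ vanishes. Since $\Tor^R_i(k,\phi)$ is a morphism of graded $k$-vector spaces, hence preserves degree, its restriction to each graded piece is forced to be zero, and therefore $\Tor^R_i(k,\phi) = 0$ for all $i \ge 0$.

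There is no real obstacle here — the only thing to be careful about is the elementary induction that a module concentrated in degrees $\ge d$ admits a minimal free resolution whose $i$-th term is generated in degrees $\ge d+i$, which uses only the minimality condition $\partial(F_i) \subseteq \mm F_{i-1}$ and the fact that $\mm \subseteq R_{\ge 1}$.
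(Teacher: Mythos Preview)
Your proof is correct and follows essentially the same approach as the paper's: both compare the degree ranges in which $\Tor^R_i(k,M)_j$ and $\Tor^R_i(k,N)_j$ can be nonzero and observe they are disjoint. The paper's argument is more terse, simply asserting that since $M$ has no generator of degree less than $\reg_R N + 1$ one has $\Tor^R_i(k,M)_j=0$ for $j\le i+\reg_R N$, whereas you spell out the underlying induction on the minimal free resolution; this is just a difference in level of detail, not in strategy.
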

\begin{proof}
Denote $d=\reg_R N$. For any $j$, consider the induced map $\Tor^R_i(k,M)_j \to  \Tor^R_i(k,N)_j$. If $j>i+d$, we have $\Tor^R_i(k,N)_j=0$. If $j\le i+d$, since $M$ has no generator of degree less than $d+1$, we have $\Tor^R_i(k,M)_j=0$. The desired conclusion follows.
\end{proof}

The following lemma is our main tool in proving that an algebra is not absolutely Koszul. 
\begin{lem}
\label{lem_conjpair_modules}
Let $(R,\mm,k)$ be a noetherian local ring. Let $p,q\ge 1$ be integers and $\phi_1:R^p\to R^q,\phi_2: R^q\to R^p$ be non-zero maps of free $R$-modules such that $\phi_1\circ \phi_2=0$ and $\phi_2 \circ \phi_1=0$. Assume that the following conditions are satisfied:
\begin{enumerate}[\quad \rm(1)]
\item Both $\Img \phi_1$ and $\Img \phi_2$ are not Koszul modules;
\item There exist non-trivial submodules $K_1 \subseteq \Ker \phi_2,K_2 \subseteq \Ker \phi_1$ satisfying the following two conditions:
\begin{enumerate}[\quad \rm (a)]
\item There are Betti splittings $\Ker \phi_1=\Img \phi_2+K_2$ and $\Ker \phi_2=\Img \phi_1+K_1$,
\item Each of the modules $K_1\cap \Img \phi_1,K_2\cap \Img \phi_2$ is either zero or Koszul.
\end{enumerate}
\end{enumerate}
Then $\lind_R (\Img \phi_1)=\lind_R (\Img \phi_2)=\infty$. In particular, $R$ is not absolutely Koszul.
\end{lem}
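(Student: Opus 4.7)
The plan is to extract two inequalities, $\lind_R M_2 \le \lind_R M_1 - 1$ and $\lind_R M_1 \le \lind_R M_2 - 1$, where I set $M_i = \Img \phi_i$. Since condition~(1) guarantees $\lind_R M_i \ge 1$, these will close into a loop that is inconsistent for finite values, forcing both linearity defects to be infinite.

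First, I read off the first-syzygy relation: the surjection $R^p \twoheadrightarrow M_1$ induced by $\phi_1$ fits into the short exact sequence $0 \to \Ker \phi_1 \to R^p \to M_1 \to 0$, and Proposition~\ref{prop_lindef}(1) gives
\[
\lind_R(\Ker \phi_1) = \lind_R M_1 - 1.
\]
Next, I exploit the Betti splitting $\Ker \phi_1 = M_2 + K_2$ from condition~(2)(a), applied to the Mayer--Vietoris-type short exact sequence
\[
0 \to M_2 \cap K_2 \xrightarrow{\iota} M_2 \oplus K_2 \to M_2 + K_2 \to 0.
\]
The defining property of a Betti splitting is precisely that $\Tor^R_i(k, \iota) = 0$ for all $i \ge 0$, which is the hypothesis required by Proposition~\ref{prop_lindef}(2). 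Its second inequality, combined with condition~(2)(b) that forces $\lind_R(M_2 \cap K_2) = 0$, gives
\[
\lind_R(M_2 \oplus K_2) \le \max\{0,\, \lind_R(M_2 + K_2)\} = \lind_R(M_2 + K_2).
\]
Since $M_2$ is a direct summand of $M_2 \oplus K_2$ one has $\lind_R M_2 \le \lind_R(M_2 \oplus K_2)$, and together with the syzygy identity above this yields $\lind_R M_2 \le \lind_R M_1 - 1$. The completely symmetric argument with the roles of $\phi_1$ and $\phi_2$ swapped produces $\lind_R M_1 \le \lind_R M_2 - 1$.

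To conclude, I argue by contradiction: if $\lind_R M_1 < \infty$, the first inequality forces $\lind_R M_2 < \infty$, and then combining both inequalities yields $\lind_R M_1 \le \lind_R M_1 - 2$, which is absurd. Hence $\lind_R M_1 = \lind_R M_2 = \infty$, and in particular $R$ is not absolutely Koszul since $M_1$ witnesses infinite linearity defect. The only steps requiring some care are the verification that the Betti splitting condition is genuinely equivalent to the Tor-vanishing hypothesis of Proposition~\ref{prop_lindef}(2) (which follows by chasing the long exact sequence of $\Tor$ attached to the Mayer--Vietoris sequence), and the degenerate case $M_i \cap K_i = 0$ (where $M_i + K_i = M_i \oplus K_i$ and the second inequality of Proposition~\ref{prop_lindef}(2) is trivially available). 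Beyond this bookkeeping I do not anticipate any serious obstacle; the whole proof is a clean two-inequality loop driven by Proposition~\ref{prop_lindef}.
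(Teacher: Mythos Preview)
Your proof is correct and follows essentially the same route as the paper's: both arguments use Proposition~\ref{prop_lindef}(1) on the exact sequence $0\to\Ker\phi_1\to R^p\to M_1\to 0$ to get $\lind_R\Ker\phi_1=\lind_R M_1-1$, then apply Proposition~\ref{prop_lindef}(2) to the Mayer--Vietoris sequence coming from the Betti splitting to deduce $\lind_R M_2\le\lind_R M_1-1$, and conclude by symmetry. The only cosmetic difference is that the paper bounds $\max\{\lind_R K_2,\lind_R N\}$ directly rather than passing through $\lind_R(M_2\oplus K_2)$, but this is the same inequality.
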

\begin{proof}
Denote $M=\Img \phi_1, N=\Img \phi_2$. Since there is an exact sequence
\[
0\longrightarrow \Ker \phi_1 \longrightarrow R^p \longrightarrow M \longrightarrow 0
\]
and $\lind_R M\ge 1$, by Proposition \ref{prop_lindef}(1), $\lind_R \Ker \phi_1=\lind_R M-1$.

Consider the exact sequence
\[
0\longrightarrow K_2 \cap N \longrightarrow K_2 \oplus N \longrightarrow \Ker \phi_1 \longrightarrow 0.
\]
Since the decomposition $\Ker \phi_1=K_2+N$ is a Betti splitting, the map $\Tor^R_i(k,K_2\cap N) \to \Tor^R_i(k,K_2\oplus N)$ is zero for all $i\ge 0$. Using Proposition \ref{prop_lindef}(2), we get
\[
\max \{\lind_R K_2,\lind_R N\} \le \max\{\lind_R (K_2\cap N),\lind_R \Ker \phi_1\}=\lind_R M-1. 
\]
The equality holds since $\lind_R (K_2\cap N)=0$. Therefore $\lind_R N\le \lind_R M-1$. Similarly, $\lind_R M\le \lind_R N-1$. These inequalities cannot hold simultaneously unless $\lind_R M=\lind_R N=\infty$.
\end{proof}

In practice, to prove that a Cohen-Macaulay algebra is not absolutely Koszul, we will usually pass to an artinian reduction using a regular sequence of linear forms, and use the following special case of (the graded analog of) Lemma \ref{lem_conjpair_modules}. 
\begin{cor}
\label{cor_conjpair}
Let $R$ be a standard graded $k$-algebra. Let $l_1,l_2$ be non-zero linear forms such that $l_1l_2=0$. Assume that there exist ideals $(0) \neq K_1,K_2$ not generated by linear forms such that the following are satisfied:
\begin{enumerate}[\quad \rm(1)]
\item $(0):l_1=K_2+(l_2)$ and $(0):l_2=K_1+(l_1)$;
\item The decompositions $(0):l_1=K_2+(l_2)$ and $(0):l_2=K_1+(l_1)$ are Betti splittings, e.g. $K_1\cap (l_1)=K_2\cap (l_2)=(0)$;
\item Each of the modules $K_1\cap (l_1)$ and $K_2\cap (l_2)$ is either zero or Koszul.
\end{enumerate}
Then $\lind_R (l_1)=\lind_R (l_2)=\infty$ and hence $R$ is not absolutely Koszul.
\end{cor}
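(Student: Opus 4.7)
The plan is to obtain the corollary as the $p = q = 1$ instance of Lemma \ref{lem_conjpair_modules}. I set $\phi_i : R \to R$ to be multiplication by $l_i$ for $i=1,2$, so that $\Img \phi_i = (l_i)$ and $\Ker \phi_i = (0):l_i$. The compositions $\phi_1 \circ \phi_2$ and $\phi_2 \circ \phi_1$ are both multiplication by $l_1 l_2 = 0$, hence vanish. Under this dictionary, hypothesis (2)(a) of the lemma is precisely conditions (1) and (2) of the corollary, and hypothesis (2)(b) is condition (3).

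The only remaining point is to verify hypothesis (1) of the lemma, namely that neither $(l_1)$ nor $(l_2)$ is a Koszul module. Since $K_j \neq 0$ is by assumption not generated by linear forms, it carries a minimal generator of degree at least $2$; additivity of graded Betti numbers in homological degree $0$ across the Betti splitting $(0):l_i = K_j + (l_j)$ then yields $\beta_{0,d}((0):l_i) = \beta_{0,d}(K_j)$ for every $d \ge 2$, so $(0):l_i$ also has a minimal generator of degree $\ge 2$. Consequently, the first syzygy module $((0):l_i)(-1)$ of the cyclic module $(l_i)$ has a minimal generator in degree $\ge 3$, so $(l_i)$ fails to admit a linear resolution. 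R\"omer's theorem cited in Section \ref{sect_background} identifies Koszulness of a cyclic module generated in a single degree with having a linear resolution, so $(l_i)$ is not Koszul; one may assume without loss that $R$ itself is Koszul, since otherwise the conclusion is automatic.

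With hypothesis (1) established, Lemma \ref{lem_conjpair_modules} delivers $\lind_R (l_1) = \lind_R (l_2) = \infty$, whence $R$ is not absolutely Koszul. The only non-routine step in the plan is the non-Koszulness verification above; the rest is a direct translation between the two statements.
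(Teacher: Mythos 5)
Your proposal is essentially the paper's own proof: specialize Lemma \ref{lem_conjpair_modules} to $p=q=1$ with $\phi_i$ the multiplication maps $R(-1)\xrightarrow{\cdot l_i}R$, and check hypothesis (1) by showing that $(0):l_i$ acquires a minimal generator of degree $\ge 2$ from $K_j$. Your use of the degree-zero additivity of the Betti splitting to see this is a correct (and slightly more careful than the paper's) way to rule out $K_j\subseteq (l_j)$. The one place you diverge is the step ``no linear resolution $\Rightarrow$ not Koszul'': the paper gets this from the isomorphism $M\cong \gr_{\mm}(M)(-d)$ for a module generated in a single degree $d$ together with \cite[Proposition 1.5]{HIy}, which requires no hypothesis on $R$, whereas you invoke R\"omer's theorem, which assumes $R$ Koszul, and then dispose of the non-Koszul case by a ``without loss of generality.'' That reduction only recovers the weaker conclusion that $R$ is not absolutely Koszul; the statement also asserts $\lind_R(l_1)=\lind_R(l_2)=\infty$, which your argument does not deliver when $R$ fails to be Koszul. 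Replacing the appeal to R\"omer by the single-degree criterion closes this (minor) gap and makes the two proofs coincide.
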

\begin{proof}
Generally, if $M$ is a finitely generated graded $R$-module, which is generated in a single degree $d$, then $M\cong \gr_{\mm}(M)(-d)$ as graded $R$-modules. Hence from \cite[Proposition 1.5]{HIy}, $\lind_R M=0$ if and only if $M$ has a $d$-linear resolution over $R$. By the assumption (1), we conclude that $\lind_R (l_1)\ge 1$ and $\lind_R (l_2)\ge 1$. Now using the graded analog of Lemma \ref{lem_conjpair_modules} for the maps $R(-1) \xrightarrow{\cdot l_1} R$ and $R(-1) \xrightarrow{\cdot l_2} R$ we get $\lind_R (l_1)=\lind_R (l_2)=\infty$.
\end{proof}
\begin{rem}
Roughly speaking, the hypotheses of Corollary \ref{cor_conjpair} ensure that the homology of the linear part of the free resolution of $(l_1)$ is non-trivial at every homological degree. For simplicity, assume that $K_1\cap (l_1)=K_2\cap (l_2)=(0)$. Let $F$ be the minimal graded free resolution of $(l_1)$. Then as $K_2\subseteq (0):l_1$, and $K_2$ is not generated by linear forms, a minimal first syzygy of $(l_1)$ induces a non-zero element in $H_1(\linp^R F)$. Similar thing happens for $(l_2)$. Since $(0):l_1=K_2 \oplus (l_2)$, the first syzygies of $(l_2)$ is a direct sum of the second syzygies of $(l_1)$. In particular, a minimal second syzygy of $(l_1)$ induces a non-zero element in $H_2(\linp^R F)$. Moreover, the same thing happens for $(l_2)$. Repeating this argument, we see that the linear part of $F$ has non-zero homology at every homological degree. 
\end{rem}
\begin{rem}
\label{rem_exactzerodivisors}
Henriques and \c{S}ega introduced in \cite{HS} the notion of an exact pair of zero divisors. Assume that $R$ is a standard graded $k$-algebra, and $x, y$ are homogeneous elements in $R$ such that $xy=0$. We say that $x$ and $y$ form an {\it exact pair of zero divisors} if $0:x=(y)$ and $0:y=(x)$. If $x$ and $y$ form an exact pair of zero divisors and both are linear forms, then $\lind_R (x)=0$ since $(x)$ has the linear resolution
\[
\cdots \xrightarrow{\cdot x} R(-4) \xrightarrow{\cdot y} R(-3) \xrightarrow{\cdot x} R(-2) \xrightarrow{\cdot y} R(-1) \to (x) \to 0.
\]
Similarly $\lind_R (y)=0$. One of the main results of \cite{HS}, Theorem 3.3, yields a criterion for absolutely Koszul rings using exact pairs of zero divisors. The hypotheses and conclusion of Corollary \ref{cor_conjpair} are in some sense ``opposite'' to that of \cite[Theorem 3.3]{HS}.
\end{rem}

\begin{ex}
Let $R=k[x,y,z,u]/(x^2,xy,y^2,z^2,zu,u^2)$. This was considered by Roos, who proved that $R$ is a bad Koszul algebra, and in particular not absolutely Koszul \cite[Theorem 2.4]{Roos05}. Corollary \ref{cor_conjpair} yields a direct proof of the non-absolute Koszulness of $R$. The study of this example in fact motivated the non-absolute Koszulness criterion and the main results of this paper. 

Denote $l_1=x-z,l_2=x+z$. Then
\begin{align*}
(0):l_1 &=(yu)+(l_2),\\
(0):l_2 &=(yu)+(l_1),\\
(yu)\cap (l_1) & = (yu) \cap (l_2)=(0).
\end{align*}
Hence the conditions of Corollary \ref{cor_conjpair} are fulfilled. In particular, $\lind_R (x-z)= \lind_R (x+z)=\infty$ and $R$ is not absolutely Koszul.
\end{ex}
\begin{ex}
\label{ex_Conca}
The following construction is taken from \cite[Example 3.8]{Con}. Let $Q=k[x,y,z,u]$ and $R=Q/I$ where
$$
I=(x^2+yz,xy-yu,xz,xu,y^2).
$$
 Let $\phi$ be the map $R(-1)^2=Re_1\oplus Re_2\to R^2$ given by the matrix
\[
\left(
\begin{matrix}
-x & y \\
z  & x
\end{matrix}
\right).
\]
We claim that $\lind_R \Img \phi=\infty$, whence $R$ is not absolutely Koszul.

First $R$ is Koszul since $R$ admits the following Koszul filtration (where by abuse of notation, we use $f$ to denote the residue class of $f$):
\[
\left\{(0),(x),(z),(x,z),(x,y),(z,u),(x,z,u),(x,y,z,u)\right\}.
\]
In detail, there are equalities
\begin{align*}
(0):(x) &=(z,u),\\
(0):(z) &= (x),\\
(z):(x,z) &=(x,z,u),\\
(x):(x,y) &=(x,y,z,u),\\
(z): (z,u) &=(x,z),\\
(z,u) :(x,z,u) &=(x,y,z,u),\\
(x,z,u):(x,y,z,u) &=(x,y,z,u).
\end{align*}

Denote $M=\Img \phi$. Now $\Ker \phi$ equals the image of the map $R(-2)^2\oplus R(-3) \to R(-1)^2=Re_1\oplus Re_2$ given by the matrix
\[
\left(
\begin{matrix}
-x & y  & 0\\
z  & x  & u^2
\end{matrix}
\right).
\]
In particular $\phi^2=0$. We can identify $(\Img \phi)(-1)=M(-1)$ with $(-xe_1+ze_2,ye_1+xe_2)$ and letting $K=(u^2e_2)$, we get a decomposition of the first syzygy of $M$: $\Omega^R_1(M)=\Ker \phi=M(-1)+K$. Note that $K\cong (R/(x,y))(-3)$, so $K$ has a $3$-linear resolution. In the same way, let $L=M(-1)\cap K$, then $L=(zu^2e_2) \cong (R/(x,y))(-4)$ has a 4-linear resolution.

We will show that $\reg_R M=2$. Denote $t_i(M)=\sup\{j: \Tor^R_i(k,M)_j\neq 0\}$, then $\reg_R M=\sup_{i\ge 0}\{t_i(M)-i\}$.  Since $t_1(M)=3$, $\reg_R M\ge 2$, thus it suffices to show that $t_i(M)\le i+2$ for all $i\ge 0$. Induct on $i$; the cases $i=0,1$ are clear. Consider the exact sequence
\[
0\longrightarrow L \longrightarrow M(-1) \oplus K \longrightarrow \Omega^R_1(M) \longrightarrow 0. 
\]
For $i\ge 2$, we have
\begin{align*}
t_i(M)=t_{i-1}(\Omega^R_1(M)) &\le \max\{t_{i-1}(M)+1,t_{i-1}(K),t_{i-2}(L)\}\\
                        &=\max\{t_{i-1}(M)+1,i+2\}=i+2.
\end{align*}
In the display, the second equality holds since $K$ has a 3-linear and $L$ has a 4-linear resolution. The last equality holds by the induction hypothesis. Therefore $\reg_R M=2$.

Since $M$ is generated in degree 1 and $R$ is Koszul, we conclude that $M$ is not Koszul. Moreover, by Lemma \ref{lem_zeromap}, the decomposition $\Omega^R_1(M)=M(-1)+K$ is a Betti splitting. Hence by the graded analog of Lemma \ref{lem_conjpair_modules}, $\lind_R M=\infty$.

Notably, while $R$ is not an integral domain (as its socle contains $yz\neq 0$), there exists no pair $l_1$ and $l_2$ of linear forms of $R$ that satisfy the conditions of Corollary \ref{cor_conjpair}. We give a sketch of the argument here. Assume the contrary, that there exist such a pair of linear forms $l_1,l_2$. Denote by $f,g$ the linear forms of $Q=k[x,y,z,u]$ representing the linear forms $l_1,l_2$ of $R$. 

Working in $Q$, since $fg\in I\subseteq (x,y)$, either $f$ or $g$ belongs to $(x,y)$. We can assume that $f\in (x,y)$. By elementary considerations, up to scaling, only the following cases are possible, where $a,b,\alpha \in k,(a,b)\neq (0,0),\alpha \neq 0$:
\begin{enumerate}
 \item $f=y, g=a(x-u)+by$,
 \item $f=x, g=az+bu$,
 \item $f=x+\alpha y$, $g=a\bigl(x-\alpha y+(1/\alpha)z\bigr)+b(\alpha y-u)$.
\end{enumerate}
\end{ex}
Now we return to $R$. In Case (2), $(0):l_2=(x)$ is generated by a linear form, a contradiction. In Case (3), $(0):l_2=(l_1)$, again a contradiction. In Case (1), we must have $a=0$, otherwise $(0):l_2=(l_1)$. Thus it remains to consider the case $(l_1)=(l_2)=(y)$. We have $(0):y=(y,x-u,z^2)$. By assumption, there exists a submodule $K\subseteq (y,x-u,z^2)=U$ such that $U=(y)+K$ is a Betti splitting. Since $\Tor^R_i(k,(y)\cap K) \to \Tor^R_i(k,(y)) \oplus \Tor^R_i(k,K)\to \Tor^R_i(k,U)$ is exact, the map $\Tor^R_i(k,(y))\to \Tor^R_i(k,U)$ is injective for all $i\ge 0$. In particular, for all $i$, 
\[
\beta_i(U)=\beta_i((y))+\beta_i(U/(y)).
\]
But $\beta_1(U)=7 \neq \beta_1((y))+\beta_1(U/(y))=3+5$. This contradiction finishes the proof.

This example shows that the criterion of Lemma \ref{lem_conjpair_modules} is stronger than that of Corollary \ref{cor_conjpair}. Nevertheless for the main applications of this paper, it suffices to invoke Corollary \ref{cor_conjpair}.
\section{Segre products}
\label{sect_Segre}
In the current and next section, we prove the main theorems \ref{thm_Segre_negative} and \ref{thm_Veronese_negative}. Our general strategy in both cases is as follows. Suppose that $R$ is a Koszul Cohen-Macaulay $k$-algebra which is not a complete intersection such that its $h$-polynomial $h_R(t)$ satisfies $h_R(-1)>0$, and we want to show that $R$ is not absolutely Koszul. (This is the cases for the rings considered in Sections \ref{sect_Segre} and \ref{sect_Veronese}). First we go modulo a regular sequence of linear forms to an artinian reduction $S$ of $R$ (at least by extending $k$), then by Theorem \ref{thm_Koszulmap}, it suffices to prove that $S$ is not absolutely Koszul. Note that $S$ still satisfies $h_S(-1)>0$ since the $h$-polynomial does not change. To simplify the problem, we find an ideal $I$ generated by linear forms of $S$ such that $S/I$ has linear resolution over $S$, and $h_{S/I}(-1)>0$. We can ensure $\reg_S (S/I)=0$ by constructing a Koszul filtration of $S$ containing $I$. The condition $h_{S/I}(-1)>0$ and Proposition \ref{prop_HSobstruction} give at least a partial guarantee that $S/I$ is not absolutely Koszul. Then we construct suitable linear forms $l_1$ and $l_2$ in order to apply Corollary \ref{cor_conjpair} to $S/I$ and deduce that it is not absolutely Koszul. Then thanks to Theorem \ref{thm_Koszulmap}, $R$ is also not absolutely Koszul, and we are done. The most tricky part in this strategy is the search for $l_1$ and $l_2$, for which we depend heavily on computer assistance.

The relevant Segre products have lower embedding dimensions, so we treat them before the Veronese rings. Recall that if $A$ and $B$ are standard graded $k$-algebra, then their Segre product $A*B$ is the subalgebra of $A\otimes_k B$ with the graded component $(A*B)_i=A_i\otimes_k B_i$ for all $i\ge 0$.

In the following, for $1\le m\le n$, let $S_{m,n}$ denote the Segre product $k[x_1,\ldots,x_m]*k[y_1,\ldots,y_n]$. The Hilbert function of $S_{m,n}$ is given by
\[
\Hilb(S_{m,n},i)=\dim_k A_i\dim_k B_i=\binom{m+i-1}{m-1}\binom{n+i-1}{n-1}, \quad \quad \text{ for $i\ge 1$}.
\]
In particular, the $h$-polynomial $h_{S_{m,n}}(t)$ of $S_{m,n}$ is
\[
h_{S_{m,n}}(t)=\sum_{i=0}^{m-1} \binom{m-1}{i}\binom{n-1}{i}t^i
\]
and its embedding dimension and Krull dimension are $\embdim(S_{m,n})=mn$ and $\dim S_{m,n}=m+n-1$, respectively.
\begin{prop}
\label{prop_S36}
Assume that $\chara k=0$. Then the Segre product of $k[x_1,x_2,x_3]$ and $k[y_1,y_2,\ldots,y_6]$ is not absolutely Koszul.
\end{prop}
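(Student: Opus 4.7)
The plan is to follow the general strategy sketched at the start of Section \ref{sect_Segre}. Set $R=S_{3,6}$. A direct computation with the formula for $h_{S_{m,n}}(t)$ gives
$$
h_R(t)=1+10t+10t^2,
$$
so $h_R(-1)=1>0$. Since $R$ is a Koszul Cohen--Macaulay algebra which is not a complete intersection, Proposition \ref{prop_HSobstruction} shows that this numerical condition is already incompatible with the Backelin--Roos property, which gives encouragement (though not a proof) that $R$ is not absolutely Koszul.

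First, after extending $k$ if necessary, I would go modulo a regular sequence of $\dim R=8$ generic linear forms to pass to an artinian Koszul reduction $S$, whose $h$-polynomial is the same as that of $R$. By Theorem \ref{thm_Koszulmap} it suffices to prove that $S$ is not absolutely Koszul. To bring the problem within reach of a Macaulay2 search, I would then identify an ideal $I\subseteq S$ generated by a relatively large collection of linear forms and contained in some Koszul filtration of $S$; the existence of the filtration guarantees that $S/I$ has a linear resolution over $S$, and a second application of Theorem \ref{thm_Koszulmap} reduces the problem to showing that $T:=S/I$ is not absolutely Koszul. The ideal $I$ should be pushed as far as possible while keeping $h_T(-1)>0$, so that Proposition \ref{prop_HSobstruction} still leaves room for $T$ to fail absolute Koszulness.

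Next, within the relatively small ring $T$, I would search by computer for two linear forms $l_1,l_2$ with $l_1l_2=0$ such that the colon ideals split as
$$
(0):l_1=K_2+(l_2),\qquad (0):l_2=K_1+(l_1),
$$
where neither $K_1$ nor $K_2$ is generated by linear forms and $K_1\cap(l_1)=K_2\cap(l_2)=(0)$ (so that the Betti-splitting hypothesis in Corollary \ref{cor_conjpair} is automatic and the intersections are trivially Koszul). Once such a pair is located, Corollary \ref{cor_conjpair} applied to $T$ immediately gives $\lind_T(l_1)=\lind_T(l_2)=\infty$, whence $T$ is not absolutely Koszul, and therefore neither is $R$.

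The main obstacle is the last step: the criterion is non-constructive, and the pair $(l_1,l_2)$ must be found by exhaustive search over a parameter family of linear forms of $T$. An unlucky choice of reducing regular sequence or of the auxiliary Koszul-filtration ideal $I$ can either eliminate all viable candidate pairs or leave the necessary colon-ideal computations outside the feasible range of Macaulay2; some iteration over these auxiliary choices, guided by the numerical hint $h_T(-1)>0$, is expected before a workable configuration is pinned down.
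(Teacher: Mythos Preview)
Your plan is correct and matches the paper's strategy, but the paper's execution for $S_{3,6}$ is more direct in two respects worth knowing before you start computing.

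First, the paper does \emph{not} take the intermediate quotient by a Koszul-filtration ideal $I$ in this case. The artinian reduction $U$ already has embedding dimension $10$ and Hilbert series $1+10t+10t^2$, which is small enough that the search for $l_1,l_2$ succeeds directly in $U$; the extra reduction to $T=S/I$ is used only in the harder cases ($S_{4,5}$ and the Veronese rings). Second, rather than a generic regular sequence, the paper uses a structured one---essentially the antidiagonal sums $z_{11},\,z_{12}-z_{21},\,z_{13}-z_{22}-z_{31},\ldots$ of the generic $3\times 6$ matrix---so that the artinian reduction is again presented by $2$-minors of an explicit $3\times 6$ matrix in ten variables. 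With this presentation the pair
\[
l_1=a_1-a_3+a_5-a_7,\qquad l_2=a_2+a_6-a_8-a_{10}
\]
works, with $K_2=(a_3a_9)$, $K_1=(a_4a_{10})$, and both intersections $K_i\cap(l_i)$ zero, so Corollary~\ref{cor_conjpair} applies immediately. A generic regular sequence would likely still admit such a pair, but the structured choice makes the search dramatically shorter.
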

\begin{proof}
The ring $R=S_{3,6}$ is defined by the $2$-minors of the generic matrix
\begin{center}
\[
\left(
\begin{matrix}
z_{11} & z_{12} & z_{13} & z_{14} & z_{15} & z_{16}\\
z_{21} & z_{22} & z_{23} & z_{24} & z_{25} & z_{26}\\
z_{31} & z_{32} & z_{33} & z_{34} & z_{35} & z_{36}
\end{matrix}
\right)
\]
\end{center}
Denote $U=R/J$, where 
\begin{gather*}
J=(z_{11},z_{12}-z_{21},z_{13}-z_{22}-z_{31},z_{14}-z_{23}-z_{32},z_{15}-z_{24}-z_{33},\\
z_{16}-z_{25}-z_{34},z_{26}-z_{35},z_{36})
\end{gather*}
is generated by a regular sequence of length $8=\dim R$. Then $U$ is a quotient ring $B/H$ of $B=k[z_{21},z_{22},\ldots,z_{34},z_{35}]$. The ring $U$ has Hilbert series $1+10t+10t^2$.

Rename the variables of $B$ in the dictionary order: $a_1=z_{21},a_2=z_{22},\ldots,a_{10}=z_{35}$. Then $B=k[a_1,\ldots,a_{10}]$. In particular, $H$ is the ideal of $2$-minors of the matrix
\begin{center}
\[
\left(
\begin{matrix}
0 & a_1 & a_2+a_6 & a_3+a_7 & a_4+a_8 & a_5+a_9\\
a_1 & a_2 & a_3 & a_4 & a_5 & a_{10}\\
a_6 & a_7 & a_8 & a_9 & a_{10} & 0
\end{matrix}
\right)
\]
\end{center}

Denote $l_1=a_1-a_3+a_5-a_7, l_2=a_2+a_6-a_8-a_{10}$. Then we have
\begin{align*}
(0):l_1 &=(l_2,a_3a_9),\\
(0):l_2   &=(l_1,a_4a_{10}),\\
(l_2)\cap (a_3a_9)&=(l_1) \cap (a_4a_{10})=(0).
\end{align*}
By Corollary \ref{cor_conjpair}, $U$ is not absolutely Koszul, and as $\reg_R U=0$, neither is $R$ by Theorem \ref{thm_Koszulmap}.
\end{proof}

\begin{prop}
\label{prop_S45}
Assume that $\chara k=0$. The Segre product of $k[x_1,x_2,x_3,x_4]$ and $k[y_1,y_2,y_3,y_4,y_5]$ is not absolutely Koszul.
\end{prop}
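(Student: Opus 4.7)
The plan is to follow the strategy described at the beginning of Section \ref{sect_Segre} and to mimic the argument given for $S_{3,6}$ in Proposition \ref{prop_S36}. Write $R = S_{4,5}$ as the quotient of the polynomial ring $k[z_{ij}]_{1\le i\le 4,\,1\le j\le 5}$ by the ideal of $2$-minors of the generic $4\times 5$ matrix $(z_{ij})$. Then $\dim R = 4+5-1 = 8$, $\embdim R = 20$, and
\[
h_R(t) = \sum_{i=0}^{3}\binom{3}{i}\binom{4}{i}t^i = 1+12t+18t^2+4t^3,
\]
so $h_R(-1) = 3 > 0$. In particular Proposition \ref{prop_HSobstruction} already forbids $R$ from having the Backelin--Roos property, giving indirect evidence that non-absolute Koszulness should hold.

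The first step is to reduce to an artinian quotient. I would choose a regular sequence $J$ of $8$ linear forms inside $R$ analogous to the anti-diagonal choice used in the proof of Proposition \ref{prop_S36}: each generator of $J$ is either a corner variable $z_{ij}$ or a difference of the form $z_{ij}-(\text{sum of entries on the same antidiagonal } i+j)$, tuned so that the generators form a regular sequence modulo the ideal of $2$-minors. This gives an artinian reduction $U = R/J$ with Hilbert series $1+12t+18t^2+4t^3$, presented as $B/H$ for some polynomial ring $B$ in $12$ variables (which I would rename $a_1,\ldots,a_{12}$ in the dictionary order), and $H$ is generated by the $2$-minors of an explicit $4\times 5$ matrix whose entries are linear combinations of $0$ and the $a_i$. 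Since $J$ is generated by a regular sequence of linear forms, the reduction map $R\twoheadrightarrow U$ is Koszul in the sense that $\reg_R U = 0$, so by Theorem \ref{thm_Koszulmap} it suffices to prove that $U$ is not absolutely Koszul.

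The heart of the argument, and the main obstacle, is the search for a pair $(l_1,l_2)$ of linear forms in $U$ meeting the hypotheses of Corollary \ref{cor_conjpair}. Guided by the pattern that worked for $S_{3,6}$, I would look for $l_1,l_2$ among signed combinations of ``diagonal'' linear forms in the $a_i$, and the companion ideals $K_1,K_2$ among principal ideals generated by a single quadratic monomial $a_p a_q$ coming from distinct rows of the renamed matrix. Concretely, the goal is to exhibit explicit elements $l_1,l_2 \in U$ and monomials $m_1, m_2$ such that $l_1 l_2 = 0$ and
\[
(0):l_1 = (l_2,m_1),\quad (0):l_2 = (l_1,m_2),\quad (l_2)\cap(m_1)=(l_1)\cap(m_2)=(0).
\]
Because Corollary \ref{cor_conjpair} is non-constructive, the actual identification of such $l_1,l_2,m_1,m_2$ requires extensive computer experimentation with \cite{GS}; once a candidate quadruple is produced, the verification of the four displayed relations is a routine Macaulay2 computation. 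With these data in hand, Corollary \ref{cor_conjpair} applies directly, yielding $\lind_U(l_1) = \lind_U(l_2) = \infty$, so $U$ is not absolutely Koszul, and Theorem \ref{thm_Koszulmap} transfers this back to $R = S_{4,5}$.
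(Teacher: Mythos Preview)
Your outline captures the overall strategy correctly---reduce to an artinian ring and apply Corollary~\ref{cor_conjpair}---but it is only a plan, not a proof: you never exhibit the linear forms $l_1,l_2$ or the monomials $m_1,m_2$, and you explicitly defer their discovery to a computer search you have not carried out. More seriously, the expectation that the $S_{3,6}$ pattern (principal quadratic $K_i$ with $(l_i)\cap K_i=0$, working directly in the artinian reduction $U$) will go through for $S_{4,5}$ is not borne out by the paper's proof, which suggests that no such simple data exists.

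In the paper the argument for $S_{4,5}$ is substantially more delicate than for $S_{3,6}$. After forming the artinian reduction $U$ exactly as you describe, the author passes to a \emph{further} quotient $W=U/(a_{12})$; since $U$ is artinian, $a_{12}$ is a zero-divisor, so one cannot conclude $\reg_U W=0$ for free. Instead the paper constructs an explicit Koszul filtration of $U$ containing $(a_{12})$ to certify that $W$ has a linear resolution over $U$ (and hence over $R$). Only in $W$ does the author produce the pair $l_1,l_2$, and even there the clean situation you anticipate fails: one has $K_1=K_2=K=(a_6a_{11})$, but $K\cap(l_1)=K\cap(l_2)=\nn^3\neq 0$. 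Consequently hypothesis~(2) of Corollary~\ref{cor_conjpair} (that the decompositions be Betti splittings) and hypothesis~(3) (that the intersections be Koszul) are not automatic; the paper establishes them by proving $\reg_W K=2$ and $\reg_W(l_i)=2$ via an inductive argument and then invoking Lemma~\ref{lem_zeromap}. Your proposal omits all of these steps, and the shape of the paper's proof indicates they are not avoidable by a more fortunate choice of $l_1,l_2$ in $U$.
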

\begin{proof}
The ring $R=S_{4,5}$ is defined by the $2$-minors of the generic matrix
\begin{center}
\[
\left(
\begin{matrix}
z_{11} & z_{12} & z_{13} & z_{14} & z_{15}\\
z_{21} & z_{22} & z_{23} & z_{24} & z_{25}\\
z_{31} & z_{32} & z_{33} & z_{34} & z_{35}\\
z_{41} & z_{42} & z_{43} & z_{44} & z_{45}
\end{matrix}
\right)
\]
\end{center}
Denote $U=R/J$, where 
\begin{gather*}
J=(z_{11},z_{12}-z_{21},z_{13}-z_{22}-z_{31},z_{14}-z_{23}-z_{32}-z_{41},z_{15}-z_{24}-z_{33}-z_{42},\\
z_{25}-z_{34}-z_{43},z_{35}-z_{44},z_{45})
\end{gather*}
is generated by a regular sequence of length $8=\dim R$. Then $U$ is a quotient ring $B/H$ of $$B=k[z_{21},z_{22},\ldots,z_{43},z_{44}].$$
The ring $U$ has Hilbert series $1+12t+18t^2+4t^3$.

Rename the variables of $B$ in the dictionary order: $a_1=z_{21},a_2=z_{22},\ldots,a_{12}=z_{44}$. Then $B=k[a_1,\ldots,a_{12}]$. In particular, $H$ is the ideal of $2$-minors of the matrix
\begin{center}
\[
\left(
\begin{matrix}
0 & a_1 & a_2+a_5 & a_3+a_6+a_9 & a_4+a_7+a_{10}\\
a_1 & a_2 & a_3 & a_4 & a_8+a_{11}\\
a_5 & a_6 & a_7 & a_8 & a_{12}\\
a_9 & a_{10} & a_{11} & a_{12} & 0
\end{matrix}
\right)
\]
\end{center}

\textsf{Step 1}: Let $\mm$ be the graded maximal ideal of $U$. We claim that there exists a Koszul filtration $\Fc$ of $U$ which contains the following ideals: $(0), (a_{12}), \mm.$ 

In detail, let $\Fc$ be the collection of ideals
\begin{gather*}
(0), I_1=(a_{12}), I_2=(a_{12},a_{11}), I_3=(a_{12},a_{11},a_{10}), I_4=(a_{12},a_{11},a_{10},a_9),\\
I_5=(a_{12},a_{11},\ldots,a_8), I_6=(a_{12},a_{11},\ldots,a_8,a_7+a_4),\\
 I_7=(a_{12},a_{11},\ldots,a_7,a_4),I_8=(a_{12},a_{11},\ldots,a_7,a_4,a_6+a_3),\\
 I_9=(a_{12},a_{11},\ldots,a_7,a_6,a_4,a_3),I_{10}=(a_{12},a_{11},\ldots,a_6,a_5+a_2,a_4,a_3),\\ I_{11}=(a_{12},a_{11},\ldots,a_3,a_2),I_{12}=(a_{12},a_{11},\ldots,a_3),\\
I_{13}=(a_{12},a_{11},\ldots,a_6,a_4), I_{14}=(a_{12},a_{11},\ldots,a_4),\mm.
\end{gather*}
Then $\Fc$ is a Koszul filtration since we have the following identities
\begin{align*}
&(0):I_1 =I_6,  &I_1:I_2 =I_8, \quad \quad & I_2:I_3 =I_{10},   & I_3:I_4 =\mm, \\
& I_4:I_5 =I_{14},  &I_5:I_6  =\mm, \quad \quad  &I_6:I_7 =I_{12}, &I_7:I_8 =\mm,\\
& I_8:I_9 =I_{11},  &I_9:I_{10}=\mm, \quad \quad  & I_{10}:I_{11}=\mm,  & I_9:I_{12}=\mm,\\
& I_7:I_{13}= I_{11}, & I_{13}:I_{14}= \mm, \quad \quad & I_{11}:\mm =\mm. & 
\end{align*}

\textsf{Step 2}: Denote $W=U/(a_{12})$. Note that the Hilbert series of $W$ is $1+11t+12t^2+t^3$. Let $\nn$ be the graded maximal ideal of $W$, then $\nn^3=(a_1a_6a_{11})\neq 0$ and $\nn^4=0$. As we have seen, $\reg_U W=0=\reg_R U$. If $R$ was absolutely Koszul, we conclude by Theorem \ref{thm_Koszulmap} that so is $W$. On the other hand, letting $l_1=a_2+a_4-a_6+a_{10}$, $l_2=a_2+2a_3+2a_5+3a_6-a_8+6a_9-a_{10}$, $K=(a_6a_{11})$, we have
\begin{align}
(0):l_1&=K+(l_2), \nonumber\\
(0):l_2&=K+(l_1), \nonumber\\
K\cap (l_1) &=K\cap (l_2)=(a_1a_6a_{11})=\nn^3 \label{eq_intersectKl}.
\end{align}
We claim that:
\begin{enumerate}[\quad \rm (1)]
\item $\reg_W K=2$,
\item  $\reg_W (l_1)=\reg_W (l_2)=2$.
\end{enumerate}

For (1): Since $\Fc$ contains the ideals $(a_{12})$ and $I_{11}=(a_{12},a_{11},\ldots,a_3,a_2)$, and $(a_{12})$ is the minimal non-zero element of $\Fc$, there is an induced Koszul filtration of $W$ which contains $(a_{11},a_{10},\ldots,a_3,a_2)=(0):(a_6a_{11})$. Hence $\reg_W (a_6a_{11})=2$.

For (2): Clearly $\reg_R (l_1)\ge 2$ and $\reg_R (l_2)\ge 2$. For the reverse inequalities, we prove by induction on $i$ that $t_i((l_1))\le i+2$ and $t_i((l_2))\le i+2$ for all $i\ge 0$. The cases $i=0,1$ are clear. Assume that $i\ge 2$.

Denote $L=\nn^3 \cong (W/\nn)(-3)$, which has a $3$-linear resolution. Consider the exact sequence
\[
0\longrightarrow L \longrightarrow K \oplus (l_2) \longrightarrow (0):l_1 \longrightarrow 0.
\]
The module $((0):l_1)(-1)$ being the first syzygy of $(l_1)$, we obtain
\begin{align*}
t_i((l_1))=t_{i-1}((0):l_1)+1 & \le \max\{t_{i-1}(K),t_{i-1}((l_2)),t_{i-2}(L)\}+1 \\
                              & = \max\{i+2,t_{i-1}((l_2))+1\} \\
                              & =i+2.
\end{align*}
In the chain, the second equality holds since $K$ has a $2$-linear resolution, and $L$ has a $3$-linear resolution. The last equality holds by induction hypothesis. Similarly, $t_i((l_2))\le i+2$, and we finish the proof of (2).

From (1), (2) and \eqref{eq_intersectKl}, the decompositions $(0):l_1=K+(l_2)$ and $(0):l_2=K+(l_1)$ are Betti splittings by an application of Lemma \ref{lem_zeromap}. By Corollary \ref{cor_conjpair}, $W$ is not absolutely Koszul. This contradiction implies that $S_{4,5}$ is not absolutely Koszul.
\end{proof}
From Propositions \ref{prop_S36}, \ref{prop_S45}, we easily deduce Theorem \ref{thm_Segre_negative}.
\begin{proof}[Proof of Theorem \ref{thm_Segre_negative}]
Apply Lemma \ref{lem_retract} and the fact that if $m\le m',n\le n'$ then there is an algebra retract $S_{m,n} \to S_{m',n'}$.
\end{proof}

\section{Veronese rings}
\label{sect_Veronese}
In this section, we prove Theorem \ref{thm_Veronese_negative}. Let $S$ be a standard graded $k$-algebra. Denote $S^{(c)}=\oplus_{i=0}^\infty S_{ic}$ the $c$-th Veronese subring of $S$, whose grading is given by $\deg S_{ic}=i$ for all $i$. For $n,c\ge 1$, denote by $V_{n,c}$ the ring $k[x_1,\ldots,x_n]^{(c)}$.

The Hilbert function of $V_{n,c}=k[x_1,\ldots,x_n]^{(c)}$ is given by
\[
\Hilb(V_{n,c},i)=\binom{n+ic-1}{n-1}, \quad \quad \text{for all $i\ge 1$}.
\]
In particular, the $h$-polynomial of $V_{n,c}$ is
\[
h_{V_{n,c}}(t)=\sum_{i=0}^{n-1}h_it^i, \quad \text{where} ~ h_i=\sum_{j= 0}^i(-1)^{i-j}\binom{n-1+jc}{n-1}\binom{n}{i-j}.
\]
The embedding dimension and Krull dimension of $V_{n,c}$ are $\binom{n+c-1}{c}$ and $n$, respectively.

Let $R$ be a standard graded $k$-algebra with the graded maximal ideal $\mm$. Assume that the ideal $\mm$ is minimally generated by the linear forms $a_1,\ldots,a_n$. We say that $R$ is {\it strongly Koszul} (with respect to the sequence $a_1,\ldots,a_n$), if for every $1\le r\le n$ and every sequence of pairwise distinct elements $i_1,i_2,\ldots,i_r$ of $[n]=\{1,\ldots,n\}$, the ideal $(a_{i_1},\ldots,a_{i_{r-1}}):a_{i_r}$ is generated by a subset of $\{a_1,\ldots,a_n\}$. If $R$ is strongly Koszul with respect to the sequence $a_1,\ldots,a_n$, then clearly the collection of ideals generated by subsets of $\{a_1,\ldots,a_n\}$ forms a Koszul filtration for $R$. Furthermore, for any sequence of pairwise distinct elements $i_1,\ldots,i_r$ in $[n]$, the ring $R/(a_{i_1},\ldots,a_{i_r})$ is also strongly Koszul. It is known that any Veronese subring of a polynomial ring is strongly Koszul (\cite[Proposition 2.3]{HHR}).

Recall the statement of Theorem \ref{thm_Veronese_negative}.
\begin{thm}
\label{thm_Veronese_negative2}
Assume $\chara k=0$. Then $V_{n,c}$ is not absolutely Koszul in the following cases:
\begin{enumerate}[\quad\rm(1)]
\item $c=2$ and $n\ge 7$;
\item $c\in \{3,4\}$ and $n\ge 5$;
\item $c= 5$ and $n\ge 4$.
\end{enumerate}
\end{thm}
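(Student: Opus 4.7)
The plan is a two-stage argument paralleling the proof of Theorem~\ref{thm_Segre_negative}. First, I would reduce the theorem to the four base cases $(n,c) \in \{(7,2),(5,3),(5,4),(4,5)\}$. For fixed $c$ and any $n \leq n'$, the inclusion $k[x_1,\dots,x_n] \hookrightarrow k[x_1,\dots,x_{n'}]$ restricts to $V_{n,c} \hookrightarrow V_{n',c}$, while the surjection $k[x_1,\dots,x_{n'}] \to k[x_1,\dots,x_n]$ sending $x_{n+1},\dots,x_{n'} \mapsto 0$ restricts to a retraction $V_{n',c} \twoheadrightarrow V_{n,c}$; thus $V_{n,c}$ is a graded algebra retract of $V_{n',c}$, and Lemma~\ref{lem_retract} will propagate non-absolute-Koszulness upward in $n$. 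Each of the families (1)--(3) is then reduced to a single base case.

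For each base case, I would follow the general strategy outlined at the start of Section~\ref{sect_Segre}. Write $R = V_{n,c}$; being Cohen--Macaulay of Krull dimension $n$ and Koszul, $R$ admits (possibly after extending $k$) a regular sequence of $n$ linear forms, and I would denote by $U$ the resulting artinian reduction. Theorem~\ref{thm_Koszulmap} reduces the problem to showing $U$ is not absolutely Koszul. In the spirit of the proof of Proposition~\ref{prop_S45}, I would then pass further to a quotient $W = U/I$ by an ideal $I$ generated by linear forms, chosen so that $W$ has a linear resolution over $U$. To guarantee the latter, I would build a Koszul filtration of $U$ containing $I$; this is plausible because every Veronese subring of a polynomial ring is strongly Koszul, a property that descends to quotients by subsets of the distinguished linear generators. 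Proposition~\ref{prop_HSobstruction} guides the choice of $I$: the formula for $h_{V_{n,c}}(t)$ yields $h_R(-1) > 0$ in each base case, and one should retain this sign in $h_W(t)$. Another application of Theorem~\ref{thm_Koszulmap} then reduces the task to showing $W$ is not absolutely Koszul, which I would verify by producing linear forms $l_1, l_2 \in W$ and ideals $K_1, K_2$ satisfying the hypotheses of Corollary~\ref{cor_conjpair}: $l_1 l_2 = 0$, $(0):l_1 = K_2 + (l_2)$, $(0):l_2 = K_1 + (l_1)$, each decomposition is a Betti splitting (verified in the simplest case by $K_i \cap (l_i) = 0$, and more generally via Lemma~\ref{lem_zeromap}), and $K_i \cap (l_i)$ is either zero or Koszul, for instance a power of the graded maximal ideal.

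The hard part will be the search for $l_1$ and $l_2$. The criterion of Corollary~\ref{cor_conjpair} is non-constructive, and the embedding dimensions of the base rings --- namely $28, 35, 70, 56$ for $(n,c) = (7,2), (5,3), (5,4), (4,5)$ respectively --- make brute-force enumeration infeasible. I would rely on substantial Macaulay2 experimentation, of the type carried out in Propositions~\ref{prop_S36} and~\ref{prop_S45}, both to discover candidate pairs $(l_1, l_2)$ and to certify the Betti-splitting conditions. A secondary technical step, arising whenever $K_i \cap (l_i) \neq 0$, is bounding $\reg_W K_i$ and $\reg_W(l_i)$ by an induction on homological degree along the lines of step~2 of the proof of Proposition~\ref{prop_S45}, supported by the Koszul filtration of $W$. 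As emphasised in the introduction, finding a more conceptual proof that avoids the computer search remains an interesting open problem.
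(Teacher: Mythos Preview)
Your proposal is correct and follows essentially the same approach as the paper: reduce to the four base cases via algebra retracts and Lemma~\ref{lem_retract}, pass to an artinian reduction $U$ and then to a further quotient $W$ by linear forms (using strong Koszulness of Veronese rings to ensure $\reg_R W=0$), and finally apply Corollary~\ref{cor_conjpair} with computer-found pairs $(l_1,l_2)$, handling the non-trivial intersections via the regularity induction of Proposition~\ref{prop_S45}. One small simplification the paper exploits that you need not hedge on: the explicit regular sequence $x_1^c,\ldots,x_n^c$ already lives in $V_{n,c}$, so no field extension is needed for the artinian reduction.
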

\begin{proof}
If $m\le n$ then there is an algebra retract $V_{m,c} \to V_{n,c}$. Therefore from Lemma \ref{lem_retract}, we only need to show that $V_{n,c}$ is not absolutely Koszul if $(n,c)$ belongs to the set 
$$
\{(7,2),(5,3),(5,4),(4,5)\}.
$$
Denote $S=k[x_1,\ldots,x_n]$ and $R=V_{n,c}$. Then $R$ is generated as an algebra by the standard $k$-basis of $S_c$. The vector space dimension of $S_c$ is $N=\binom{n+c-1}{c}$. Order the standard $k$-basis of $S_c$ in the degree revlex order as $m_1<\cdots<m_N$, so that $m_1=x_n^c,m_2=x_n^{c-1}x_{n-1},\ldots,m_N=x_1^c$. Clearly $x_n^c,x_{n-1}^c,\ldots,x_1^c$ is a maximal regular sequence of elements of degree 1 of $R$, so the artinian reduction $U=R/(x_n^c,x_{n-1}^c,\ldots,x_1^c)$ of $R$ has embedding dimension $N-n$. We have a surjection with the source a polynomial ring $B=k[a_1,\ldots,a_{N-n}]\twoheadrightarrow U$. One can obtain $U$ with the following Macaulay2 function $\textsf{artinVeroneseSubring}$.

Input: Integers $n,c\ge 1$.

Output: The artinian reduction $U=V_{n,c}/(x_n^c,\ldots,x_1^c)$ of $V_{n,c}$. The variables of the ambient ring of $U$ are ordered as in the description above.

\begin{verbatim}
artinVeroneseSubring=(n,c)->(
      toricR=QQ[x_1..x_n];
      maxtoricR=ideal vars toricR;
      dsc=flatten entries mingens maxtoricR^c;
      dsg={};
      for i from 1 to n do dsg=append(dsg,x_(n+1-i)^c);
      lengthdsc=length dsc;
      S=QQ[y_1..y_lengthdsc];
      axavero=map(toricR,S,dsc);
      veroId=trim preimage(axavero,ideal(dsg));
      U1=prune(S/veroId);
      Am=ambient U1;
      numvar=length flatten entries vars U1;
      newring=QQ[a_1..a_numvar];
      axanew=map(newring,Am,{a_1..a_numvar});
      artinVeroId=trim axanew(ideal U1);
      newring/artinVeroId)
\end{verbatim}

\textsf{Case 1:} $(n,c)=(7,2)$. 

In this case $N=28$, $B=k[a_1,\ldots,a_{21}]$. The Hilbert series of $U$ is $1+21t+35t^2+7t^3$. Denote $W=U/(a_{21})$, $\nn$ the graded maximal ideal of $W$. The Hilbert series $h_W(t)=1+20t+25t^2+2t^3$ of $W$ satisfies $h_W(-1)>0$, hence it at least does not have the Backelin--Roos property by Proposition \ref{prop_HSobstruction}. Let $\nn$ be the graded maximal ideal of $W$, then $\nn^3=(a_1a_{12}a_{19},a_1a_{12}a_{20})$. Since $R$ is a strongly Koszul ring, so is $W$, and $\reg_R W=0$.

Denote $l_1=a_1+a_2-a_8+a_{12}+a_{20}, l_2=a_1+a_2-a_8+a_{12}-a_{20}$ and $K=(a_2+a_{12},a_3,a_7,a_8-a_{12})a_{19}$. Then 
\begin{align}
(0):l_1 & = K+(l_2),\nonumber\\
(0):l_2 &= K+(l_1),\nonumber\\
K\cap (l_1) &=K\cap (l_2)=(a_1a_{12}a_{19}). \label{eq_intersectKl_Veronese72}
\end{align}

We claim that 
\begin{enumerate}[\quad \rm (1)]
\item $\reg_W K=2$,
\item $\reg_W (l_1)=\reg_W (l_2)=2$.
\end{enumerate}

For (1): $K$ has linear quotients since $W$ is strongly Koszul and
\begin{align*}
&(0):(a_3a_{19}) =(a_i: i\in [20]\setminus 7),\\
&(a_3a_{19}):(a_7a_{19})=\nn,\\
&(a_3a_{19},a_7a_{19}):((a_2+a_{12})a_{19})=\nn,\\
&(a_3a_{19},a_7a_{19},(a_2+a_{12})a_{19}):((a_8-a_{12})a_{19})=\nn.
\end{align*}
By Lemma \ref{lem_linearquotient}, $\reg_W K=2$.

For (2): Denote $L=K\cap (l_1)=(a_1a_{12}a_{19}) \cong (W/\nn)(-3)$. Then $L$ has a $3$-linear resolution. Similarly to Step 2 in the proof of Proposition \ref{prop_S45}, we get $\reg_R (l_1)=\reg_R (l_2)=2.$

Now from (1), (2) and \eqref{eq_intersectKl_Veronese72}, it follows that the decompositions $(0):l_1=(K,l_2)$ and $(0):l_2=(K,l_1)$ are Betti splittings by an application of Lemma \ref{lem_zeromap}. Hence by Corollary \ref{cor_conjpair}, $W$ is not absolutely Koszul. Since $\reg_R W=0$, by Theorem \ref{thm_Koszulmap}, $R=V_{7,2}$ is not absolutely Koszul.

\textsf{Case 2}: $(n,c)=(5,3)$.

In this case $N=35$, $B=k[a_1,\ldots,a_{30}]$. The Hilbert series of $U$ is $1+30t+45t^2+5t^3$. Denote $W=U/(a_{30})$; its Hilbert series is $1+29t+32t^2+t^3$. Denote $l_1=a_1+a_2-a_5+a_{24}+a_{29}, l_2=a_1+a_2-a_5-a_{24}-a_{29}$ and $K=(a_5a_{28},a_6a_{28},a_{15}a_{28})$. We have 
\begin{align*}
(0):l_1 &=K+(l_2),\\
(0):l_2 &=K+(l_1),\\
K\cap (l_1) &= K\cap (l_2)=(0).
\end{align*}
By Corollary \ref{cor_conjpair}, $W$ is not absolutely Koszul. Since $R$ is strongly Koszul, $\reg_R W=0$. Hence by Theorem \ref{thm_Koszulmap}, $R=V_{5,3}$ is not absolutely Koszul.

\textsf{Case 3:} $(n,c)=(5,4)$. 

In this case $N=70$, $B=k[a_1,\ldots,a_{65}]$. The Hilbert series of $U$ is $1+65t+155t^2+35t^3$. Denote $W=U/(a_{60},a_{61},\ldots,a_{65})$; its Hilbert series is $1+59t+63t^2+t^3$. Denote $l_1=a_1+a_5-a_{16}+a_{54}+a_{59}, l_2=a_1+a_5-a_{16}-a_{54}-a_{59}$ and $K=(a_2a_{58},a_6a_{58},a_{16}a_{58},a_{35}a_{58})$. We have
\begin{align*}
(0):l_1 &=K+(l_2),\\
(0):l_2 &=K+(l_1),\\
K\cap (l_1) &= K\cap (l_2)=(0).
\end{align*}
Hence by Corollary \ref{cor_conjpair}, $\lind_W (l_1)=\lind_W (l_2)=\infty$ and $W$ is not absolutely Koszul. Since $R$ is strongly Koszul, $\reg_R W=0$. Hence by Theorem \ref{thm_Koszulmap}, $R=V_{5,4}$ is not absolutely Koszul.

\textsf{Case 4:} $(n,c)=(4,5)$. In this case $N=56$, $B=k[a_1,\ldots,a_{52}]$. The Hilbert series of $U$ is $1+52t+68t^2+4t^3$. Denote $W=U/(a_{52})$; its Hilbert series is $1+51t+52t^2+t^3$. Denote $l_1=a_1+a_2-a_{10}+a_{45}+a_{51}$, $l_2=a_1+a_2-a_{10}-a_{45}-a_{51}$ and $K=(a_{10}a_{48})$. Then there are equalities:
\begin{align*}
(0):l_1 &=K+(l_2),\\
(0):l_2 &=K+(l_1),\\
K\cap (l_1) &=K \cap (l_2) = (0).
\end{align*}
Hence thanks to Corollary \ref{cor_conjpair}, $W$ is not absolutely Koszul. Since $R$ is strongly Koszul, $\reg_R W=0$. Hence by Theorem \ref{thm_Koszulmap}, $R=V_{4,5}$ is not absolutely Koszul. The proof is concluded.
\end{proof}

Tempted by the theoretical and experimental results available thus far, we ask:
\begin{quest}
Is it true that the ring $k[x_1,x_2,x_3,x_4]^{(c)}$ is not absolutely Koszul for all $c\ge 6$, at least in characteristic $0$?
\end{quest}
If this is true, then by Lemma \ref{lem_retract},  all the absolutely Koszul Veronese rings $k[x_1,\ldots,x_n]^{(c)}$ are described by Proposition \ref{prop_Veronese_positive}.

While many Veronese subrings and Segre products are not absolutely Koszul, it is not clear which of them are good. Hence we ask:
\begin{quest}
Are Veronese subrings and Segre products of polynomial rings good in the sense of Roos?
\end{quest}

\subsection*{Acknowledgments}
The author is a Marie Curie fellow of the Istituto Nazionale di Alta Matematica. He would like to thank Aldo Conca, Jan-Erik Roos and Marilina Rossi for inspiring discussions.

\end{document}